\newtheorem{theorem}{Theorem}[section]
\newtheorem{lemma}[theorem]{Lemma}
\newtheorem{definition}[theorem]{Definition}
\newtheorem{notion}[theorem]{}
\newtheorem{proposition}[theorem]{Proposition}
\newtheorem{corollary}[theorem]{Corollary}
\theoremstyle{definition}
\newtheorem{remark}[theorem]{Remark}
\author{Qingnan An}
\address{School of Mathematics and Statistics, Northeast Normal University, Changchun, {\rm130024}, China}
\email{qingnanan1024@outlook.com}
\author{Zhichao Liu}
\address{School of Mathematical Sciences,
Dalian University of Technology,
Dalian, {\rm116024}, China}
\email{lzc.12@outlook.com}
\keywords{Absorbing extension; Stable rank one; Real rank zero; Stable.}
\subjclass[2000]{Primary 46L80, Secondary 46K35 19K35}
\begin{document}

\title[Unital absorbing extensions] {On unital absorbing extensions of C$^*$-algebras of stable rank one and real rank zero}

\begin{abstract}
Suppose that $A,B$ are nuclear, separable ${\rm C}^*$-algebras of stable rank one and real rank zero, $A$ is unital simple, $B$ is stable and $({\rm K}_0(B),{\rm K}_0^+(B))$  is weakly unperforated in the sense of Elliott \cite{Ell}. We show that any unital extension with trivial index maps of $A$ by $B$ is absorbing.

\end{abstract}

\maketitle
\section*{Introduction}

The theory of ${\rm C}^*$-algebra extensions began with the work of Brown, Douglas and Fillmore on classifying essentially normal operators \cite{BDF}. But soon it develops into  a theory related to many fields of mathematics. Absorbing extensions play an important role in the theory of ${\rm C}^*$-algebras. The noncommutative  Weyl-von Neumann theorem of Voiculescu \cite{V} shows that for any separable ${\rm C}^*$-algebra $A$, every (unital) essential extension of $A$ by $\mathcal{K}$ is (unital) absorbing.
For the more general separable case, the essential extensions of  $A$ by $B\otimes\mathcal{K}$, Thomsen shows that there always exists a (unital) trival extension which is (unital) absorbing \cite{T}. 
It is a non-trivial task to decide that under which conditions any essential extension of separable ${\rm C}^*$-algebras is absorbing.

 In this paper, we consider the ${\rm C}^*$-algebra extensions of $A$ by $B$.
We say an extension $e:0\to B\to E\to A\to 0$ has $trivial$ $index$ $maps$, if  the induced six-term exact sequence
$$
\xymatrixcolsep{2pc}
\xymatrix{
{\,\,\mathrm{K}_0(B)\,\,} \ar[r]^-{}
& {\,\,\mathrm{K}_0(E)\,\,}\ar[r]^-{}
& {\,\,\mathrm{K}_0(A)\,\,} \ar[d]^-{\delta_0}
 \\
{\,\,\mathrm{K}_1(A)\,\,} \ar[u]^-{\delta_1}
& {\,\,\mathrm{K}_1(E)\,\,}\ar[l]^-{}
& {\,\,\mathrm{K}_1(B)\,\,}\ar[l]^-{} }
$$
satisfies that both $\delta_0$ and $\delta_1$ are the zero maps.
Using  the intrinsic characterizations established by Elliott and Kucerovsky \cite{EK} and techniques for stable C*-algebras, we prove the following theorem.
\begin{theorem}
Let $A,B$ be nuclear, separable ${\rm C}^*$-algebras of stable rank one and real rank zero. Suppose that $A$ is simple,  $B$ is stable and $({\rm K}_0(B),{\rm K}_0^+(B))$  is weakly unperforated. Let $e$ be a unital extension with trivial index maps:
$$
0\to B\to E\to A\to 0.
$$
Then $e$ is absorbing.
\end{theorem}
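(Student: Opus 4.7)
The plan is to invoke the intrinsic characterization of absorbing extensions due to Elliott and Kucerovsky~\cite{EK}: under the nuclearity and separability hypotheses at hand, an essentially unital extension $0\to B\to E\to A\to 0$ with $B$ stable is unital absorbing if and only if it is \emph{purely large}, meaning that for every positive $x\in E\setminus B$ the hereditary subalgebra $\overline{xBx}$ of $B$ contains a stable ${\rm C}^*$-subalgebra that is full in $B$. The present extension is automatically essential since $A$ is unital and simple, so the task reduces to verifying the purely large condition.

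The first step would be to reduce purely largeness to a statement about projections. Since $A$ and $B$ both have real rank zero and $\delta_0=0$, a standard projection-lifting argument shows that $E$ itself has real rank zero. Given $x\in E_+\setminus B$, real rank zero of $A$ produces a nonzero projection inside $\overline{\pi(x)A\pi(x)}$, and this lifts to a projection $p\in\overline{xEx}$ with $\pi(p)\neq 0$ by real rank zero of the hereditary subalgebra $\overline{xEx}\subset E$. Using the inclusion $pBp\subseteq\overline{xBx}$, the problem is reduced to showing that for every projection $p\in E$ with $\pi(p)\neq 0$, the corner $pBp$ contains a stable ${\rm C}^*$-subalgebra of $B$ that is full in $B$.

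The core step is then to construct, inside $pBp$, an infinite sequence of pairwise orthogonal projections $\{p_n\}_{n\geq 1}$ each Murray-von Neumann equivalent in $B$ to a fixed full projection $e_0\in B$; the ${\rm C}^*$-subalgebra they generate together with the implementing partial isometries is then isomorphic to $e_0Be_0\otimes\mathcal{K}$, hence stable, and is full in $B$ because $e_0$ is. I would produce the $p_n$ inductively: real rank zero supplies candidate projections in the complement of the already-chosen ones; simplicity of $A$ combined with triviality of $\delta_0$ transports fullness data from $A$ back into $E$, so that $\pi(p)$ being full in $A$ forces $p$ to be ``infinitely large'' relative to the stable algebra $B$; weak unperforation of $({\rm K}_0(B),{\rm K}_0^+(B))$ upgrades the resulting ${\rm K}_0$-inequalities into Cuntz subequivalences inside $B$; and stable rank one of $B$ then promotes these subequivalences to genuine Murray-von Neumann equivalences of projections.

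The main obstacle will be this last inductive construction. The delicate point is to ensure that, after cutting down by finitely many of the previously chosen $p_i$, the remainder in $pBp$ still has enough ${\rm K}_0$-room to accommodate a new projection equivalent to $e_0$ --- in other words, that the ``size of $p$ modulo $B$,'' controlled by the fullness of $\pi(p)$ in $A$ together with the stability of $B$, is not exhausted by any finite union of copies of $e_0$. All hypotheses of the theorem must be used in concert to secure this; once it is established, the Elliott-Kucerovsky criterion concludes that $e$ is unital absorbing.
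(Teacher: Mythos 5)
Your frame agrees with the paper's: both invoke the Elliott--Kucerovsky theorem to reduce absorption to pure largeness, both use real rank zero of $E$ (via the vanishing of $\delta_0$) to replace an arbitrary $c\in E\setminus B$ by a projection $p\in\overline{cEc^*}$ with $\pi(p)\neq 0$, and both then aim to exhibit a stable full subalgebra of $B$ inside $pBp$. But your core construction has a gap that is fatal as stated: you propose to fix a \emph{full projection} $e_0\in B$ and build $e_0Be_0\otimes\mathcal{K}$ out of orthogonal copies of $e_0$ inside $pBp$. A $\mathrm{C}^*$-algebra satisfying all the hypotheses on $B$ need not contain any full projection: for example $B=c_0(\mathbb{N},\mathcal{K})$ is separable, nuclear, stable, of stable rank one and real rank zero, with $(\mathrm{K}_0(B),\mathrm{K}_0^+(B))\cong(\bigoplus_{\mathbb{N}}\mathbb{Z},\bigoplus_{\mathbb{N}}\mathbb{N})$ unperforated, yet every projection in it is supported on finitely many coordinates and hence generates a proper ideal. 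Your subalgebra $\overline{\bigcup_n M_n(e_0Be_0)}$ would then never be full in $B$, no matter how many orthogonal copies you produce. The paper avoids this entirely by proving that the corner $pBp$ \emph{itself} is stable and full in $B$ (indeed $pBp\cong B$), using R{\o}rdam's criterion that a $\mathrm{C}^*$-algebra with cancellation and an approximate unit of projections is stable iff its scale $\Sigma$ equals $\mathrm{K}_0^+$.

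The second gap is that the decisive quantitative step is exactly the one you defer: showing that $[q]\leq[p]$ in $\mathrm{K}_0(E)$ for \emph{every} projection $q\in B$. This is where all the hypotheses actually do their work, and not quite in the way you describe. Simplicity of $A$ gives only $n[\pi(p)]\geq 2[1_A]$ in $\mathrm{K}_0(A)$; pulling this back through the extension (using $\delta_*=0$ and exactness of the $\mathrm{K}_0$-sequence) introduces an error term, yielding only $n[p]+n[d]\geq 2[1_E]$ for some projection $d\in B$. The paper lifts $n[p]+n[d]$ to a projection $f\geq 1_E$ in $M_k(E)$, shows $fM_k(B)f\cong B$ is stable, and then must \emph{divide by $n$}: knowing $n\,\hat\cdot\,\Sigma\bigl((p\oplus d)M_k(B)(p\oplus d)\bigr)=\mathrm{K}_0^+$ it concludes $\Sigma=\mathrm{K}_0^+$. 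This division is precisely where weak unperforation enters (Proposition \ref{rodam p2}, a nonsimple generalization of R{\o}rdam's Proposition 3.3) --- not, as you suggest, to ``upgrade $\mathrm{K}_0$-inequalities into Cuntz subequivalences.'' Finally the error $d$ is absorbed by applying the resulting inequality to $\widetilde q$ with $[\widetilde q]=[q]+[d]$. Once the inequality $(*)$ is in hand, your inductive scheme would indeed run (stable rank one of $E$ converts the $\mathrm{K}_0$-inequality into genuine orthogonal subprojections), but without Proposition \ref{rodam p2} and the $d$-absorption trick the induction has no engine, and without replacing $e_0Be_0\otimes\mathcal{K}$ by the corner $pBp$ the conclusion of fullness can fail.
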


We apply this result to the stably strong Ext-groups and obtain the correspondence from the isomorphic classes of  extensions of ${\rm K}_*$-groups to subgroup of strong  Ext-groups.  





\section{Preliminaries}

  Let $A$ be a unital $\mathrm{C}^*$-algebra. $A$ is said to have stable rank one, written $sr(A)=1$, if the set of invertible elements of $A$ is dense. $A$ is said to have real rank zero, written $rr(A)=0$, if the set of invertible self-adjoint elements is dense in the set $A_{sa}$ of self-adjoint elements of $A$. If $A$ is not unital, let us denote the minimal unitization of $A$ by $\widetilde{A}$. A non-unital $\mathrm{C}^*$-algebra is said to have stable rank one (or real rank zero) if its unitization has stable rank one (or real rank zero).

  Denote $\mathcal{P}(A)$ the set consisting  of all the projections in $A$.
  Let $p,$ $q\in \mathcal{P}(A)$.
  One says that $p$ is $Murray$--$von$ $Neumann$ $equivalent$ to $q$ in $A$ and writes $p\sim q$ if there exists $x\in A$ such that $x^*x=p$ and $xx^*=q$. We will write $p\preceq q$ if $p$ is equivalent to some subprojection of $q$; we will write $p\leq q$ if $p$ is a subprojection of $q$. Denote $\mathcal{K}$ the ${\rm C}^*$-algebra of the compact operators on an infinite-dimensional separable Hilbert space. The classes of  projections $p,q\in A\otimes \mathcal{K}$ in $\mathrm{K}_0(A)$ will be denoted by $[p],[q]$, and we say $[p]\leq [q]$, if $p\preceq q$ in $A\otimes \mathcal{K}$.

$A$ is said to have cancellation of projections, if for any projections $p,q,e,f\in A$ with $pe=0$, $qf=0$, $e\sim f$, and $p+e\sim q+f$, then $p\sim q$. $A$ has cancellation of projections if and only if $p\sim q$ implies that there exists a unitary $u\in \widetilde{A}$ such that $u^*pu=q$ (see \cite[3.1]{L}). Every unital $\mathrm{C}^*$-algebra of stable rank one has cancellation of projections, hence, any two projections with the same ${\rm K}$-theory generate the same ideal. 
 We will use $I_p$  to represent the ideal generated by $p$ in $A$.

A ${\rm C}^*$-algebra $A$ is $stable$ if  it is isomorphic to its tensor product with the $\mathrm{C}^*$-algebra $\mathcal{K}$, i.e., $A\cong A\otimes \mathcal{K}$.

A ${\rm C}^*$-subalgebra $B$ of a ${\rm C}^*$-algebra $A$ is said to be $full$ if it is not contained in any proper two-sided closed ideal of $A$. Denote ${\mathcal M}(A)$ the multiplier algebra of $A$.
Given a projection $p$ in ${\mathcal M}(A)$, $pAp$ is a hereditary subalgebra of $A$,
 which will be called a corner. A projection $p$ in ${\mathcal M}(A)$ is full if  the corner $pAp$ is full in $A$.

\begin{definition}\rm
Let $A$, $B$ be ${\rm C}^*$-algebras and
$$e\,:\, 0 \to B \to E \xrightarrow{\pi} A \to 0 $$
be an extension of $A$ by $B$ with Busby invariant $\tau:\,A\to {\mathcal M}(B)/B$.
We say the extension $e$ is $essential$, if $\tau$ is injective.
When $A$ is unital, we say  $e$ is $unital$, if $\tau$ is unital;
we say a unital extension $e$ is $unital$ $trivial$, if there is
a unital $*$-homomorphism $\rho\, : A \to E$ such that $\pi\circ\rho = id_A$.

Note that if $A$ is unital simple and $B\neq 0$, any unital extension $e$ of $A$ by $B$ is essential.
\end{definition}
\begin{definition}\rm
Let
$e_i\,:\, 0 \to B \to E_i \to A \to 0 $
be two
extensions with Busby invariants $\tau_i$ for $i = 1, 2$. Then $e_1$ and $e_2$ are called $strongly$ $unitarily$ $equivalent$, denoted by $e_1
\sim_s e_2$,
if there exists a unitary $u \in {\mathcal M}(B)$ such that $\tau_2(a) = \pi(u)\tau_1(a)\pi(u)^*$ for all $a \in A$,
where $\pi:\,{\mathcal M}(B)\to {\mathcal M}(B)/B.$

Assume $B$ is stable. Fix an isomorphism of $\mathcal{K}$ with $M_2(\mathcal{K})$; this isomorphism induces an isomorphism $B \cong M_2(B)$ and hence isomorphisms ${\mathcal M}(B) \cong M_2({\mathcal M}(B))$ and  ${\mathcal M}(B)/B \cong M_2({\mathcal M}(B)/B)$. These isomorphisms are uniquely determined up to unitary equivalence.

Let $s_1,s_2$ be two isometries in ${\mathcal M}(B\otimes\mathcal{K})$ with $s_1s_1^*+s_2s_2^*=1$.
Define the addition $e_1 \oplus e_2$ to be the extension of $A$ by $B$ with Busby invariant
$$
\tau_1\oplus\tau_2:=s_1\tau_1s_1^*+s_2\tau_2s_2^*.
$$

We say that a unital extension $e$ with Busby invariant $\tau$ is $absorbing$ if it is strongly unitarily equivalent to its sum with any  unital trivial extension, i.e.,  $e \sim_s e\oplus \sigma$ for any unital trivial extension $\sigma$.

\end{definition}

\begin{definition}\rm (\cite{EK})
  Let $E$ be a $\mathrm{C}^*$-algebra and $B$ be a closed two-sided ideal of $E$. Let us say that $E$ is $purely$ $large$ with respect to $B$ if for every element $c$ in $E\backslash B$, the $\mathrm{C}^*$-algebra $\overline{c B c^*}$ (the intersection with $B$ of the hereditary sub-$\mathrm{C}^*$-algebra of $E$ generated by $cc^*$) contains a sub-$\mathrm{C}^*$-algebra which is stable and is full in $B$.

Let
$
e\,:\,0 \rightarrow B \rightarrow E \rightarrow A \rightarrow 0
$
be an extension of $A$ by $B$, we say that $e$ is $purely\,\,large$ if the $\mathrm{C}^*$-algebra $E$ is purely large with respect to the image of $B$.
\end{definition}


The following is the main result in \cite{EK} under the nuclear setting.
\begin{theorem}{\rm (}\cite[Theorem 6]{EK}{\rm )}\label{ek thm}
  Let $A$ and $B$ be nuclear, separable $\mathrm{C}^*$-algebras, with $B$ stable and $A$ unital. A unital extension of  $A$ by $B$ is absorbing if and only if it is purely large.
\end{theorem}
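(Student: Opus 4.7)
The statement is the Elliott-Kucerovsky absorption theorem, and the proof naturally splits into its two implications. I would treat these very asymmetrically, since the reverse direction is by far the deeper half.

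\emph{Absorbing $\Rightarrow$ purely large.} Suppose the unital extension $e$, with Busby invariant $\tau$, is absorbing, and fix $c \in E \setminus B$, so that $\pi(c) \neq 0$ in $A$. To exhibit a stable, full C*-subalgebra of $B$ inside $\overline{c B c^{*}}$, I would choose a unital trivial extension $\sigma$ of $A$ by $B$ adapted to the stability of $B$: using $B \cong M_2(B)$, place a unital $*$-homomorphism lift $\rho : A \to \mathcal{M}(B)$ of $\sigma$ into one diagonal corner of $M_2(\mathcal{M}(B)) \cong \mathcal{M}(B)$, so that the opposite corner provides a canonical stable full copy of $B$ orthogonal to $\rho(A)$. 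Absorption gives a unitary $u \in \mathcal{M}(B)$ implementing $\pi(u)(\tau \oplus \sigma)\pi(u)^{*} = \tau$; conjugation by $u$ transports that distinguished stable full subalgebra into a position where a lift of $c$ dominates it, and extracting this yields the required stable full subalgebra of $\overline{c B c^{*}}$.

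\emph{Purely large $\Rightarrow$ absorbing.} Let $\sigma$ be any unital trivial extension with unital $*$-homomorphism lift $\rho: A \to \mathcal{M}(B)$; by nuclearity of $A$ and Choi-Effros, the Busby invariant $\tau$ of $e$ lifts to a unital completely positive map $\phi: A \to \mathcal{M}(B)$. The goal is an isometry $V \in \mathcal{M}(B \otimes M_2) \cong \mathcal{M}(B)$ with
$$V(\phi \oplus \rho)(a) V^{*} - \phi(a) \in B \quad \text{for every } a \in A;$$
polar decomposition, together with stability of $B$, upgrades $V$ to a unitary implementing $e \sim_s e \oplus \sigma$. To build $V$, fix a countable dense subset $\{a_k\} \subset A$ and, following Kasparov, a quasicentral approximate unit $\{h_n\}$ of $B$ relative to $\phi(A) \cup \rho(A)$. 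Set $b_n = (h_n - h_{n-1})^{1/2}$ (with $h_0 = 0$), so the $b_n^{2}$ sum strictly to $1$. For each $n$, the purely large hypothesis, applied to a suitable $c_n \in E \setminus B$ arising from the tail $1 - h_n$, produces a stable full C*-subalgebra $D_n \subseteq B$ sitting inside $\overline{c_n B c_n^{*}}$; inside $D_n$ a standard Voiculescu-type argument yields a partial isometry $w_n$ approximating $\rho\upharpoonright_{\{a_1, \ldots, a_n\}}$ on top of $\phi$ up to errors in $B$ of norm less than $2^{-n}$. Then $V = \sum_n b_n w_n b_n$, summed strictly, is the desired isometry, where quasicentrality of $\{h_n\}$ is used to commute the $b_n$ past $\phi$ and $\rho$ modulo $B$.

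The main obstacle is this patching step in the reverse direction: one must simultaneously ensure approximate multiplicativity of the telescoped sum on each finite set $\{a_1, \ldots, a_n\}$, strict convergence of the sum in $\mathcal{M}(B)$, and vanishing of all cross terms modulo $B$. These three requirements are forced to work together by a careful interplay of the quasicentral approximate unit with the \emph{stable} fullness of each $D_n$; this second feature, which is exactly what distinguishes pure largeness from mere fullness, is what lets $\rho(A)$ be accommodated inside $\overline{c_n B c_n^{*}}$ without collision with $\phi(A)$. This non-commutative Weyl-von Neumann mechanism, imported from the Kasparov absorption framework and adapted to the multiplier algebra of a non-simple $B$, is the technical heart of the theorem.
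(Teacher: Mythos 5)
This statement is quoted by the paper from Elliott and Kucerovsky \cite[Theorem 6]{EK} and is given no proof here, so there is no in-paper argument to compare yours against; I can only measure your sketch against the original. Your architecture does match theirs: necessity of pure largeness is the softer half, and sufficiency is a Weyl--von Neumann/Kasparov telescoping argument combining a Choi--Effros lift of the Busby invariant, a quasicentral approximate unit, and the stable full subalgebras supplied by pure largeness to build an intertwiner modulo $B$.

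As written, though, both halves have genuine gaps. In the forward direction, ``transporting the distinguished stable full corner into a position where a lift of $c$ dominates it'' does not obviously place it inside $\overline{cBc^*}$ for an \emph{arbitrary} $c\in E\setminus B$; the actual argument runs through two lemmas you do not state, namely that a purely large unital trivial extension exists (Kasparov/Thomsen) and that the sum of any extension with a purely large extension is again purely large, whence $e\sim_s e\oplus\sigma$ forces $e$ to be purely large. In the reverse direction, an isometry $V$ with $V(\phi\oplus\rho)(a)V^*-\phi(a)\in B$ does not by itself yield strong unitary equivalence, and ``polar decomposition plus stability'' is not the mechanism that repairs this: one must either absorb $\sigma^{\infty}$ and invoke the standard $2\times 2$ cancellation trick, or arrange $VV^*=1$ within the telescoping itself. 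Finally, the construction of the local partial isometries $w_n$ inside $D_n$ --- the application of Kasparov's stabilization theorem to the stable full subalgebra $D_n\subseteq\overline{c_nBc_n^*}$ so as to represent $\rho$ there orthogonally to $\phi$ --- is precisely the technical content of the theorem and is asserted rather than carried out. In short, your proposal is a correct roadmap of the known proof, but not a proof.
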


 We will use the following theorem frequently.
\begin{theorem}{\rm (}\cite[Theorem 2.8]{B}{\rm )}.\label{herideal}
  If $B$ is a full hereditary ${\rm C}^*$-subalgebra of $A$ and if each of $A$ and $B$ has a strictly positive element, then $B\otimes\mathcal{K}$ is canonically isomorphic to $A\otimes\mathcal{K}$.
\end{theorem}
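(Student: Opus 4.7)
The plan is to realize $A$ and $B$ as complementary full corners in a common linking algebra $L$ built from a $B$-$A$ imprimitivity bimodule, and then deduce the stable isomorphism by showing that the two diagonal projections of $M(L)$ become Murray--von Neumann equivalent to $1$ after tensoring with $\mathcal{K}$. This reduces the statement to Kasparov's absorption theorem for $\sigma$-unital Hilbert modules.

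Concretely, fix strictly positive elements $a \in A_+$ and $b \in B_+$ with $\|a\|,\|b\|\le 1$; both $A$ and $B$ are then $\sigma$-unital. Because $B$ is hereditary with strictly positive element $b$, one has $B = \overline{bAb}$, so fullness of $B$ in $A$ is equivalent to $\overline{AbA} = A$. Set $X := \overline{bA} \subset A$ and equip it with inner products $\langle x, y\rangle_A := x^*y$ and ${}_B\langle x,y\rangle := xy^*$. For $x,y \in bA$ the element $xy^*$ lies in $bAAb \subset \overline{bAb} = B$, so the left inner product is $B$-valued; moreover $\overline{X^*X} = \overline{AbA} = A$ and $\overline{XX^*} = \overline{bAb} = B$, so both inner products are full. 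Hence $X$ is a full $B$-$A$ imprimitivity bimodule, and the linking algebra
\begin{equation*}
L := \begin{pmatrix} B & X \\ X^* & A \end{pmatrix}
\end{equation*}
is $\sigma$-unital, with diagonal multiplier projections $p, q \in M(L)$ that are both full and satisfy $pLp \cong B$, $qLq \cong A$, and $p+q = 1_{M(L)}$.

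The concluding step uses that $L \otimes \mathcal{K}$ is $\sigma$-unital and stable: by Kasparov's absorption theorem every full projection in $M(L\otimes \mathcal{K})$ is Murray--von Neumann equivalent to $1_{M(L\otimes\mathcal{K})}$. In particular $p\otimes 1$ and $q\otimes 1$ are equivalent to $1_{M(L\otimes\mathcal{K})}$, and hence to each other; conjugating by the implementing partial isometries gives
\begin{equation*}
B\otimes\mathcal{K} \;\cong\; (p\otimes 1)(L\otimes\mathcal{K})(p\otimes 1) \;\cong\; L\otimes\mathcal{K} \;\cong\; (q\otimes 1)(L\otimes\mathcal{K})(q\otimes 1) \;\cong\; A\otimes\mathcal{K}.
\end{equation*}
Canonicity follows from the observation that different choices of strictly positive $b$, or different implementing isometries, give isomorphisms that differ by conjugation by a unitary in $M(L\otimes\mathcal{K})$, so the induced map on $L\otimes\mathcal{K}$ is well defined up to inner automorphism. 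The main obstacle is the Murray--von Neumann equivalence step: producing the partial isometries in $M(L\otimes\mathcal{K})$ connecting a full $\sigma$-unital projection to the identity genuinely uses the $\sigma$-unitality of both $A$ and $B$, and is where the bulk of Brown's original technical construction -- telescoping a sequence of elements of $AbA$ approximating $a$ into matrix-unit-like partial isometries -- is spent.
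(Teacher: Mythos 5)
This statement is not proved in the paper at all: it is quoted verbatim as Theorem 2.8 of Brown's 1977 paper and used as a black box, so there is no internal proof to compare against. Judged on its own merits, your outline is the standard modern route to Brown's theorem (realize $B$ and $A$ as complementary full corners of the linking algebra $L$ of the imprimitivity bimodule $X=\overline{bA}$, then identify both corners of $L\otimes\mathcal{K}$ with $L\otimes\mathcal{K}$ itself), and the bimodule construction, the fullness computations $\overline{XX^*}=B$ and $\overline{X^*X}=\overline{Ab^2A}=\overline{AbA}=A$, and the fullness of the two diagonal multiplier projections are all correct.

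The one genuine flaw is the key lemma as you state it: ``every full projection in $M(L\otimes\mathcal{K})$ is Murray--von Neumann equivalent to $1$'' is false. Take $L=\mathbb{C}$, so $L\otimes\mathcal{K}=\mathcal{K}$ and $M(\mathcal{K})=B(\ell^2)$: a rank-one projection is full (as $\mathcal{K}$ is simple) but is certainly not equivalent to $1$. The correct statement requires, in addition to fullness, that the corner $P(L\otimes\mathcal{K})P$ be stable (equivalently, that the associated Hilbert module $\overline{P(L\otimes\mathcal{K})}$ absorb the standard module $H_{L\otimes\mathcal{K}}$); your two projections $p\otimes 1$ and $q\otimes 1$ do satisfy this, since their corners are $B\otimes\mathcal{K}$ and $A\otimes\mathcal{K}$, so the application survives once the hypothesis is added. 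But you should also be honest that this corrected lemma is not ``Kasparov's absorption theorem'': Kasparov's stabilization only gives $E\oplus H_C\cong H_C$ for a countably generated module $E$, and upgrading that to $E\otimes\ell^2\cong H_C$ for a \emph{full} $E$ over a $\sigma$-unital $C$ is exactly the Brown--Green--Rieffel absorption argument, i.e.\ it is where essentially all of Brown's original technical work resides. As written, the proposal risks circularity (the full-projection lemma is most often derived \emph{from} Brown's theorem); to make it a genuine proof you must either prove that lemma directly from Kasparov stabilization plus fullness, or cite it in its correct form from a source that does.
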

\begin{remark}\label{iota inj}
Let $A$ be a separable ${\rm C}^*$-algebra of stable rank one and $p$ be a full projection in $\mathcal{M}(A)$. Denote $\iota:\,pAp\to A$ the natural embedding map.
By Theorem \ref{herideal}, $pAp\otimes\mathcal{K}$ is isomorphic to $A\otimes\mathcal{K}$, and  the isomorphism is induced by a partial isometry $v\in {\mathcal M}(A\otimes\mathcal{K})$ which takes $x$ to $v^*xv$. 

We note that the inclusion $\iota$ induces the same order on $
{\rm K}_0$ with $v^*(\cdot)v$. For any projection $q\in pAp\otimes\mathcal{K}$, we identify
$(\iota\otimes id_\mathcal{K})(q)$ with $q$.
Note that $v^*q\in A\otimes\mathcal{K}$ satisfies
$$
v^*q(v^*q)^*=v^*qv\quad{\rm and}\quad (v^*q)^*v^*q=q,
$$
which implies that ${\rm K}_0(\iota)([q])=[v^*qv]$. (A more  generalised result can be found in \cite[Proposition 5.1]{GL}.)

\end{remark}

For any ${\rm C}^*$-algebra $A$, the positive cone of ${\rm K}_0$ and the scale are given by
$${\rm K}_0^+(A)= \{[p] :\, p \in \mathcal{P}(A \otimes \mathcal{K})\},\quad \Sigma\,A = \{[p] :\, p \in \mathcal{P}(A)\}.$$
 Now we list some results for stable algebras due to R\o rdam.
\begin{proposition} {\rm (}\cite[Proposition 3.1]{R1}{\rm )}\label{rodam p1}
Let $A$ be a ${\rm C}^*$-algebra with the cancellation property and with a
countable approximate unit consisting of projections. Then $A$ is stable if and only if $\Sigma\, A= {\rm K}_0^+(A).$
\end{proposition}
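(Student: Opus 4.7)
The plan is to prove the two implications separately; the forward direction is almost formal, while the reverse requires turning the K-theoretic hypothesis into a concrete orthogonal matrix-unit system inside $A$.

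For ($\Rightarrow$), I would fix an isomorphism $\phi:A\otimes\mathcal{K}\to A$ witnessing stability. The induced map $\phi_{*}$ is an order isomorphism on ${\rm K}_0$, so given $x\in {\rm K}_0^+(A)$ one can write $x=\phi_{*}(y)$ for some $y\in {\rm K}_0^+(A\otimes\mathcal{K})$, and by definition $y=[p]$ for some projection $p\in A\otimes\mathcal{K}$. Then $\phi(p)\in\mathcal{P}(A)$ realises the class $x$, so $\Sigma A={\rm K}_0^+(A)$.

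For ($\Leftarrow$), the main tool is a doubling lemma: \emph{under the hypothesis, every projection $p\in A$ admits a projection $q\in A$ with $pq=0$ and $q\sim p$}. To establish it, I would use $2[p]\in {\rm K}_0^+(A)=\Sigma A$ to find $r\in\mathcal{P}(A)$ with $[r]=2[p]$; cancellation in $A\otimes\mathcal{K}$ makes $r$ equivalent to $p\oplus p$, so $r$ splits as $p'+p''$ with $p'\perp p''$ and $p'\sim p''\sim p$, both $p',p''$ lying in $A$ as subprojections of $r$. Cancellation of projections in $A$ then yields a unitary $u\in\widetilde{A}$ with $upu^{*}=p'$, and $q:=u^{*}p''u\in A$ is the desired orthogonal equivalent copy.

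To upgrade the doubling lemma to stability, fix an increasing projection approximate unit $\{e_n\}$ of $A$ and construct inductively pairwise orthogonal, mutually equivalent projections $q_1,q_2,\ldots\in A$ with partial sums $s_n:=q_1+\cdots+q_n$ satisfying $s_n\geq e_n$. At each stage the portion of $e_n$ not yet dominated by $s_{n-1}$ has to be absorbed into further copies of the reference projection $q_1$; this uses the doubling lemma iteratively, together with the hypothesis $\Sigma A={\rm K}_0^+(A)$ to realise every multiple $k[q_1]$ as the class of a genuine projection in $A$, and cancellation to conjugate such projections into the orthogonal complement of $s_{n-1}$. Partial isometries $v_n\in A$ with $v_n^{*}v_n=q_1$ and $v_nv_n^{*}=q_n$ then yield matrix units $v_iv_j^{*}\in\mathcal{M}(A)$ for $\mathcal{K}$ whose diagonal sums strictly to $1_{\mathcal{M}(A)}$, giving $A\cong q_1Aq_1\otimes\mathcal{K}$ and hence stability. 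The main obstacle is precisely this coordination: the doubling lemma produces orthogonal equivalent copies one at a time, but arranging them to be simultaneously equivalent to a single fixed $q_1$ \emph{and} to dominate the given approximate unit requires using cancellation and $\Sigma A={\rm K}_0^+(A)$ in tandem throughout the recursion.
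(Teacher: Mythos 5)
The paper does not actually prove this statement; it is quoted verbatim from R\o rdam \cite[Proposition 3.1]{R1}, so I am judging your argument on its own terms. Your forward direction is fine (modulo the standard identification $\Sigma(A\otimes\mathcal K)=\mathrm K_0^+(A\otimes\mathcal K)$), and your doubling lemma is both correct and exactly the right intermediate step: it is condition (ii) of the Hjelmborg--R\o rdam stability criterion, which the authors themselves record (in a commented-out passage) as ``for each projection $p\in A$ there is a projection $q\in A$ with $p\sim q$ and $pq=0$'' being equivalent to stability in the presence of a countable approximate unit of projections. Had you cited that criterion, you would be done.

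The gap is in your final ``upgrade.'' The target you set---pairwise orthogonal, \emph{mutually equivalent} projections $q_1,q_2,\dots\in A$ whose partial sums converge strictly to $1_{\mathcal M(A)}$, yielding $A\cong q_1Aq_1\otimes\mathcal K$---does not exist in general, so the ``coordination'' you identify as the main obstacle is not merely delicate but impossible. Take $A=c_0\otimes\mathcal K\cong\bigoplus_{n=1}^\infty\mathcal K$: it has cancellation, a countable approximate unit of projections, and is stable, hence satisfies $\Sigma A=\mathrm K_0^+(A)$. But every projection $q_1\in A$ is supported on finitely many direct summands, so no multiple of $[q_1]$ dominates the classes of an approximate unit (equivalently, $\mathrm K_0(A)=\bigoplus_{\mathbb N}\mathbb Z$ has no order unit), any family of projections all equivalent to $q_1$ stays supported on the same finite set of summands, and $q_1Aq_1\otimes\mathcal K$ is a finite direct sum of copies of $\mathcal K$, never isomorphic to $A$. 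Your appeal to ``$\Sigma A=\mathrm K_0^+(A)$ to realise every multiple $k[q_1]$'' cannot bridge this: Definition \ref{scale def}(iii) gives, for each positive class, \emph{some} element of the scale dominating it after scaling, not a single one working uniformly. The repair is to abandon mutual equivalence: iterate your doubling lemma to produce pairwise orthogonal projections $g_1,g_2,\dots$ with $g_1+\cdots+g_n$ dominating (a unitary conjugate of) $e_n$ and $g_{n+1}\succsim g_1+\cdots+g_n$, and then conclude stability via the Hjelmborg--R\o rdam argument rather than via an internal matrix-unit system for $\mathcal K$.
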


\begin{definition}\label{scale def}\rm  (\cite[Definition 3.2]{R1})
A triple $\left(G, G^{+}, \Sigma\right)$ will be called a scaled, ordered abelian group if $\left(G, G^{+}\right)$ is an ordered abelian group, and $\Sigma$ is an upper directed, hereditary, full subset of $G^{+}$, i.e.,

(i) $\forall\, x_1, x_2 \in \Sigma,\, \exists\, x \in \Sigma: x_1 \leq x, x_2 \leq x$,

(ii) $\forall\, x \in G^{+},\, \forall\, y \in \Sigma: x \leq y \Rightarrow x \in \Sigma$,

(iii) $\forall \, x \in G^{+},\, \exists\, y \in \Sigma,\, \exists\, k \in \mathbb{N}: x \leq k y$.

Let $\left(G, G^{+}\right)$ be an ordered abelian group, and let $\Sigma_1$ and $\Sigma_2$ be upper directed, hereditary, full subsets of $G^{+}$. Define $\Sigma_1\, \hat{+} \,\Sigma_2$ to be the set of all elements $x \in G^{+}$ for which there exist $x_1 \in \Sigma_1$ and $x_2 \in \Sigma_2$ with $x \leq x_1+x_2$. Observe that $\Sigma_1\, \hat{+} \,\Sigma_2$ is an upper directed, hereditary, full subset of $G^{+}$.

Denote the $k$-fold sum $\Sigma\, \hat{+} \,\Sigma \hat{+} \cdots \hat{+} \Sigma$ by $k \cdot \Sigma$. Using that $\Sigma$ is upper directed,  we see that $y \in k \cdot \Sigma$ if and only if $0 \leq y \leq k x$ for some $x \in \Sigma$.
\end{definition}

If $A$ is  a ${\rm C}^*$-algebra of stable rank one and has an approximate unit consisting of projections, then  $\left({\rm K}_0(A), {\rm K}^{+}_0(A), \Sigma \,A\right)$ is a scaled, ordered abelian group and 
$$
\left({\rm K}_0\left(M_k(A)\right), {\rm K}_0^{+}\left(M_k(A)\right), \Sigma\left(M_k(A)\right)\right) \cong\left({\rm K}_0(A), {\rm K}^{+}_0(A), k\, \hat{\cdot}\, \Sigma\, A\right).
$$
\begin{definition}\rm {\rm (}\cite{Ell},\cite[8.1]{Goo2}{\rm )}\label{weakly un}
An ordered abelian group $\left(G, G_{+}\right)$ is called weakly unperforated if it satisfies both of the following two conditions.

(i) Given $x \in G$ and $m \in \mathbb{N}$ such that $m x \in G_{+}$, there exists $t \in \operatorname{tor}(G)$ such that $x+t \in G_{+}$ and $m t=0$.

(ii) Given $y \in G_{+}, t \in \operatorname{tor}(G)$, and $n \in \mathbb{N}$ such that $n y+t \in G_{+}$, then $y \pm t \in G_{+}$.
\end{definition}
Now we prove the following result for the nonsimple ordered group case, which generalizes \cite[Proposition 3.3]{R1}.
\begin{proposition}\label{rodam p2}
Let $(G, G_+,\Sigma)$ be a weakly unperforated, scaled, ordered, abelian
group, and suppose that $n\, \,\hat{\cdot} \,\Sigma = G_+$
for some $n\in\mathbb{N}$. Then $\Sigma= G_+$.
\end{proposition}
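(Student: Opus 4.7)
The plan is to reduce the proposition to showing that $2y\in\Sigma$ whenever $y\in\Sigma$. Assuming this assertion, iteration produces $2^k y\in\Sigma$ for every $k\geq 0$; then for any $x\in G_+$ the hypothesis $n\hat\cdot\Sigma=G_+$ gives $y\in\Sigma$ with $x\leq ny$, and choosing $k$ so that $2^k\geq n$ yields $x\leq ny\leq 2^k y\in\Sigma$, so that hereditariness of $\Sigma$ forces $x\in\Sigma$, i.e., $\Sigma=G_+$.

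To prove the reduction step, I would apply $n\hat\cdot\Sigma=G_+$ to the element $2ny\in G_+$, producing $w\in\Sigma$ with $2ny\leq nw$; enlarging $w$ by upper-directedness we may further assume $y\leq w$. Then $n(w-2y)\geq 0$ in $G$, so Definition \ref{weakly un}(i) provides a torsion element $t$ with $nt=0$ such that $w-2y+t\geq 0$, equivalently $2y\leq w+t$.

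Next I would check the hypothesis of Definition \ref{weakly un}(ii) for $y'=w\in G_+$ and $t'=t$: rearranging the previous inequality gives $t\geq 2y-w$, and since $2y\geq 0$ and $w\geq 0$ one has $2y-w\geq -w\geq -nw$, so $nw+t\geq 0$. Applying Definition \ref{weakly un}(ii) then forces both $w+t\in G_+$ and $w-t\in G_+$, that is, $|t|\leq w$ in the order, and consequently $w+t\leq 2w$. Combining these yields $2y\leq w+t\leq 2w$.

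The main obstacle is the concluding step of upgrading the bound $2y\leq w+t$ (with $t$ a possibly nonzero $n$-torsion element) to the genuine membership $2y\in\Sigma$. In the torsion-free setting one can take $t=0$ in Definition \ref{weakly un}(i), whereupon $2y\leq w\in\Sigma$ and hereditariness gives $2y\in\Sigma$ directly; the delicate part is thus to absorb the torsion correction $t$. I would attempt to do this by showing that $w+t$ itself already lies in $\Sigma$, either by applying Definition \ref{weakly un}(ii) iteratively to $w+t$ paired with suitable torsion elements (inductively obtaining $w+kt\in G_+$ for every $k$), or by exploiting upper-directedness and hereditariness to produce $z\in\Sigma$ with $w+t\leq z$; either route would conclude $2y\in\Sigma$ by hereditariness, finishing the proof.
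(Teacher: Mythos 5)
Your reduction to the claim ``$y\in\Sigma\Rightarrow 2y\in\Sigma$'' is logically sound, but the claim itself is essentially as hard as the full proposition, and your route to it stalls exactly where you say it does: you end with $2y\leq w+t$ for some $w\in\Sigma$ and some $n$-torsion $t$, and neither of your suggested repairs can close this. Knowing $w+kt\in G_+$ for every $k$ says nothing about membership in $\Sigma$, since $\Sigma$ is only a hereditary, upper directed, full subset of $G_+$ and need not be closed under torsion perturbations; and upper-directedness only dominates finite families of elements \emph{already in} $\Sigma$, so it cannot manufacture a $z\in\Sigma$ with $w+t\leq z$ without presupposing the conclusion $\Sigma=G_+$. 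So there is a genuine gap, and it is the crucial step.

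The missing idea is to build in extra slack when choosing the dominating element. The paper works directly with an arbitrary $g\in G_+$ (no doubling reduction) and applies the hypothesis to $(n+1)g$ rather than to $ng$, obtaining $x\in\Sigma$ with $nx\geq (n+1)g$. The surplus $nx-(n+1)g\in G_+$ is precisely what lets one cancel the torsion: from $n(x-g)\in G_+$ condition (i) gives $t$ with $nt=0$ and $x-g+t\in G_+$; then $n(x+t)+t=(n-1)x+(x+t)\in G_+$ and condition (ii) yield $x+2t\in G_+$; finally
$$(nx-(n+1)g)+(x+2t)+nt=(n+1)(x-g+t)+t\in G_+,$$
and a second application of condition (ii) gives $x-g\in G_+$, whence $g\leq x\in\Sigma$ and hereditariness finishes. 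In your setup the analogous surplus would be $nw-(n+1)(2y)$, which you do not have because you only arranged $nw\geq 2ny$; with only that bound, the quantity $n(w-2y+t)+t=(nw-2ny)+t$ is not visibly positive, so condition (ii) cannot be invoked to kill $t$. Replacing your choice of $w$ by one with $nw\geq (n+1)\cdot 2y$ (or better, dropping the doubling reduction altogether and running the argument for arbitrary $g\in G_+$) repairs the proof along the paper's lines.
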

\begin{proof}
Let $g\in G_{+}$, then $(n+1)g\in G_+$. Since $n \,\hat{\cdot}\, \Sigma=G_{+}$, there is an element $x \in \Sigma$ with $n x \geq (n+1)g$. From $n(x-g) \in G_+$ and Definition \ref{weakly un} (i), there exists $t\in {\rm tor}(G)$ such that $x-g+t\in G^+$ and $nt=0$.  Then $x+t\in G_+$, and hence,
$$
n(x+t)+t=(n-1)x+x+t\in G_+.
$$
By  Definition \ref{weakly un} (ii), we have $x+2t\in G^+$. Then,
$$
(nx-(n+1)g)+(x+2t)+nt=(n+1)(x-g+t)+t\in G^+.
$$
Using  Definition \ref{weakly un} (ii) again, we have
$$
x-g=x-g+t-t\in G^+.
$$
By the hereditary property of $\Sigma$, we obtain $g \in \Sigma$. Thus $\Sigma=G^{+}$.

\end{proof}
The following  is a straight result from above.
\begin{corollary} \label{cor K0}
  Let $A$ be a separable ${\rm C}^*$-algebra of stable rank one and real rank zero and with a weakly unperforated $({\rm K}_0(A),{\rm K}_0^+(A))$. If  $M_n(A)$ is stable for some integer $n\geq 1$, then $A$ is stable.
\end{corollary}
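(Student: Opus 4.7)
The plan is to stitch together Propositions \ref{rodam p1} and \ref{rodam p2} via the natural identification of the scale on $M_n(A)$ with $n\,\hat{\cdot}\,\Sigma A$.

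First I would verify that the hypotheses of Proposition \ref{rodam p1} apply to $A$. Stable rank one yields cancellation of projections, and since $A$ is separable and has real rank zero, $A$ admits a countable approximate unit of projections. Consequently the triple $({\rm K}_0(A),{\rm K}_0^+(A),\Sigma\,A)$ is a scaled, ordered, abelian group, and the identification recalled in the paper gives
\[
\bigl({\rm K}_0(M_n(A)),{\rm K}_0^+(M_n(A)),\Sigma\,M_n(A)\bigr)\cong \bigl({\rm K}_0(A),{\rm K}_0^+(A),n\,\hat{\cdot}\,\Sigma\,A\bigr).
\]

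Next, since $M_n(A)$ is itself separable, has stable rank one, real rank zero, and hence also satisfies the hypotheses of Proposition \ref{rodam p1}, stability of $M_n(A)$ is equivalent to $\Sigma\,M_n(A)={\rm K}_0^+(M_n(A))$. Translating through the identification above, this reads $n\,\hat{\cdot}\,\Sigma\,A={\rm K}_0^+(A)$. Because $({\rm K}_0(A),{\rm K}_0^+(A))$ is weakly unperforated by assumption, Proposition \ref{rodam p2} applies and forces $\Sigma\,A={\rm K}_0^+(A)$. A final application of Proposition \ref{rodam p1} (in the reverse direction) then gives that $A$ is stable.

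The argument is essentially a packaging of earlier results, so no single step is an obstacle in the deep sense; the only point requiring a moment of care is the passage $M_n(A)$ stable $\Rightarrow n\,\hat{\cdot}\,\Sigma A = {\rm K}_0^+(A)$, which relies on invoking Proposition \ref{rodam p1} for $M_n(A)$ and on the scale identification stated just before Definition \ref{weakly un}. Once this is noted, the corollary follows by feeding the identity $n\,\hat{\cdot}\,\Sigma\,A={\rm K}_0^+(A)$ into Proposition \ref{rodam p2}.
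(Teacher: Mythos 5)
Your proposal is correct and follows exactly the same route as the paper's own proof: identify $\Sigma\,M_n(A)$ with $n\,\hat{\cdot}\,\Sigma\,A$, apply Proposition \ref{rodam p1} to $M_n(A)$ to get $n\,\hat{\cdot}\,\Sigma\,A={\rm K}_0^+(A)$, feed this into Proposition \ref{rodam p2}, and conclude with Proposition \ref{rodam p1} again. The extra care you take in checking that $A$ satisfies the hypotheses of Proposition \ref{rodam p1} (cancellation from stable rank one, a countable approximate unit of projections from separability and real rank zero) is a welcome addition that the paper leaves implicit.
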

\begin{proof}
If  $M_n(A)$ is stable for some integer $n\geq 1$, with the fact that
$$
\left({\rm K}_0\left(M_n(A)\right), {\rm K}_0^{+}\left(M_n(A)\right), \Sigma\left(M_n(A)\right)\right) \cong\left({\rm K}_0(A), {\rm K}^{+}_0(A), n\, \hat{\cdot}\, \Sigma\, A\right),
$$
by Proposition \ref{rodam p1}, we have
$
n\, \hat{\cdot}\, \Sigma\, A={\rm K}^{+}_0(A).
$
Then Proposition \ref{rodam p2} implies
$
\Sigma\, A={\rm K}^{+}_0(A).
$
By Proposition \ref{rodam p1} again,
$A$ is stable.

\end{proof}
\begin{remark}
We claim that without the assumption of $A$ has a countable approximate unit consisting of projections, $({\rm K}_0(A),{\rm K}_0^+(A),\Sigma\,A)$ is not necessary a scaled, ordered, abelian group. We put a short example here.

Take
$$A=\{f\in M_2(C[0,1]):\,
f(0)=\fontsize{8pt}{5pt}\selectfont\left(\!\!\begin{array}{cc}
\lambda&0\\[1.5 mm]
0&\lambda
\end{array}\!\!\right)~~{\rm and}~~
f(1)=\fontsize{8pt}{5pt}\selectfont\left(\!\!\begin{array}{cc}
\mu&0\\[1.5 mm]
0&0
\end{array}\!\!\right), ~~{\rm }~~ \lambda,\mu\in\mathbb{C}\}.$$
Then we have the following exact sequence,
$$
0\to M_2(C_0(0,1))\xrightarrow{\iota}  A\xrightarrow{\pi} \mathbb{C}\oplus\mathbb{C}\to 0,
$$
where $\iota$ is the natural embedding and $\pi(f)=(\lambda,\mu)$ for $f\in A$.
The induced well-known six-term exact sequence goes as follows:
$$
0\to {\rm K}_0(A)\to\mathbb{Z}\oplus \mathbb{Z}
\xrightarrow{(2,-1)} \mathbb{Z}\to {\rm K}_1(A)\to 0,
$$
in which ${\rm K}_0(M_2(C_0(0,1)))=0$ and ${\rm K}_1(\mathbb{C}\oplus\mathbb{C})=0$.
So we obtain that
$${\rm K}_0(A)=\{(a,b)\in \mathbb{Z}\oplus \mathbb{Z}:\,
2a-b=0\}\cong \mathbb{Z}$$
and
$${\rm K}_0^+(A)=\{(n,2n)\in \mathbb{Z}\oplus \mathbb{Z}:\,
n\in \mathbb{N}\}\cong \mathbb{N}.$$

Note that  $A$ is a non-unital subhomogenous algebra of stable rank one, 
hence, $A$ has cancellation of projections.
Since $0$ is the unique projection in $A$, i.e., $A$ is projectionless, $A$ doesn't have a countable approximate unit consisting of projections. Then we can not lift the generator $(1,2)\in {\rm K}_0^+(A)$ to any projection in $A$. We have
$$\Sigma \,A=\{(0,0)\}\subset {\rm K}_0^+(A),$$
which means Definition \ref{scale def} (iii) is not satisfied.
Also note that the generator $(1,2)$ can be lifted to a projection $p$ in $M_2(A)$ with
$$
p(0)=\fontsize{8pt}{5pt}\selectfont\left(\!\!\begin{array}{cccc}
1& & &\\[1.5 mm]
&1 & &\\[1.5 mm]
& &0 &\\[1.5 mm]
& & &0
\end{array}\!\!\right)\quad {\rm and}\quad
p(1)=\fontsize{8pt}{5pt}\selectfont\left(\!\!\begin{array}{cccc}
1& & &\\[1.5 mm]
&0 & &\\[1.5 mm]
& &1 &\\[1.5 mm]
& & &0
\end{array}\!\!\right).
$$
\end{remark}


\section{Unital Absorbing Extensions}
In this section, we  consider the unital extensions of $A$ by $B$:
$$
0\to B\to E\to A\to 0,
$$
where $A,B,E$ are all separable ${\rm C}^*$-algebra of stable rank one and real rank zero, $B$ is stable and $A$ is unital simple. We will always identify $B$ as its image in $E$.


The following proposition is well-known, see \cite[Proposition 4]{LR}.
\begin{proposition}\label{lin inj}
Let
$$
0 \rightarrow B \rightarrow E \rightarrow A \rightarrow 0
$$
be an extension  of $\mathrm{C}^{*}$-algebras. Let $\delta_{j}: \mathrm{K}_{j}(A) \rightarrow \mathrm{K}_{1-j}(B)$ for $j=0,1$, be the index maps of the sequence.

(i) Assume that $A$ and $B$ have real rank zero. Then the following three conditions are equivalent:

(a) $\delta_{0} \equiv 0$,

(b) $rr(E)=0$,

(c) all projections in $A$ are images of projections in $E$.

(ii) Assume that $A$ and $B$ have stable rank one. Then the following are equivalent:

(a) $\delta_{1} \equiv 0$,

(b) $sr(E)=1$.

If, in addition, $E$ (and $A$) are unital, then (a) and (b) in (ii) are equivalent to

(c) all unitaries in $A$ are images of unitaries in $E$.

\end{proposition}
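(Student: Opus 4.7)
The plan is to use the explicit formulas for the index maps together with the approximation tools furnished by real rank zero and stable rank one. For part (i), if $p \in A$ is a projection and $x \in E$ is a self-adjoint lift with $\|x^2 - x\|$ small, then $\delta_0([p]) = [\exp(2\pi i x)] \in \mathrm{K}_1(\widetilde{B})$. The implication (c) $\Rightarrow$ (a) is then immediate: a projection lift $q$ of $p$ forces $\exp(2\pi i q) = 1$, and since $A$ has real rank zero (hence so does every $M_n(A)$), $\mathrm{K}_0(A)$ is generated by classes of projections in matrix algebras, each of which lifts, so $\delta_0 \equiv 0$. For (a) $\Rightarrow$ (c), given a self-adjoint lift $x$ of $p$, the unitary $u = \exp(2\pi i x) \in 1 + B$ represents $\delta_0([p]) = 0$, hence lies in the connected component of $1$ in the unitary group of $\widetilde{B}$; using real rank zero of $B$ one expresses $u = \exp(2\pi i b)$ for a self-adjoint $b \in B$ sitting in the hereditary subalgebra controlled by $x(1-x)$, whence $x - b$ is a self-adjoint lift of $p$ with a spectral gap at $1/2$, and its spectral projection $\chi_{(1/2, \infty)}(x - b) \in E$ is a genuine projection mapping to $p$.

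For (c) $\Rightarrow$ (b), I would take a self-adjoint $y \in E$, use real rank zero of $A$ to approximate $\pi(y)$ by a finite sum $\sum_i \lambda_i p_i$ with pairwise orthogonal $p_i \in \mathcal{P}(A)$ and nonzero reals $\lambda_i$, lift the $p_i$ to pairwise orthogonal projections $q_i \in E$ via (c) together with the standard perturbation argument for orthogonalising projections, and then use real rank zero of $B$ to approximate the remaining element $y - \sum_i \lambda_i q_i$ (which is close to $B$) by an invertible self-adjoint element; reassembly yields an invertible self-adjoint approximation of $y$, proving $rr(E) = 0$.

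Part (ii) follows the same pattern with $\delta_1([u]) = [w w^*] - [w^* w]$ for a contractive lift $w$ of a unitary $u$, with stable rank one replacing real rank zero and unitary lifts replacing projection lifts. For (a) $\Rightarrow$ (c) in the unital case, any lift $v \in E$ of $u$ can be adjusted, using the vanishing of $\delta_1([u]) \in \mathrm{K}_0(B)$ and the cancellation coming from $sr(B) = 1$, to an actual unitary lift; (c) $\Rightarrow$ (a) is immediate; and (b) $\Rightarrow$ (c) uses $sr(E) = 1$ to perturb any lift to an invertible element and then polar-decompose to a unitary lift. The principal obstacle in both parts is (a) $\Rightarrow$ (c): one must upgrade the vanishing of an abstract $K$-theoretic index to an honest projection or unitary lift inside $E$, and it is precisely at this step that the low-rank properties of $B$ are essential, converting homotopy triviality in $\mathrm{K}_*(B)$ into the concrete approximations needed to manufacture the lift.
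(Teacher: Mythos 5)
The paper does not actually prove this proposition: it is quoted verbatim from Lin and R\o rdam \cite[Proposition 4]{LR} with the single remark that it is well known, so your attempt can only be measured against the standard argument in the literature. Against that standard, the crucial implication (a) $\Rightarrow$ (c) of part (i) has a genuine error. You write $u=\exp(2\pi i x)=\exp(2\pi i b)$ with $b\in B$ self-adjoint and conclude that $x-b$ is a lift of $p$ with a spectral gap at $1/2$. That conclusion needs $\exp\bigl(2\pi i(x-b)\bigr)=\exp(2\pi i x)\exp(-2\pi i b)=1$, which fails when $x$ and $b$ do not commute; nothing in your construction forces them to commute, so $x-b$ is merely some self-adjoint lift of $p$ with no control on its spectrum near $1/2$. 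Two further steps in the same implication are asserted rather than proved: first, $[u]=0$ in $\mathrm{K}_1(B)$ only gives $u\oplus 1_n\in U_0(M_{n+1}(\widetilde B))$ for some $n$, and passing to $u\in U_0(\widetilde B)$ uses the $\mathrm{K}_1$-injectivity of real rank zero algebras, a nontrivial theorem of Lin; second, even granting $u\in U_0(\widetilde B)$, the homotopy (equivalently the exponent $b$) must live in the hereditary subalgebra $\overline{x(1-x)Bx(1-x)}$ for the perturbation to interact correctly with the functional calculus of $x$, and triviality of $[u]$ in $\mathrm{K}_1(B)$ does not imply triviality in the $\mathrm{K}_1$ of that hereditary subalgebra. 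This localization is precisely the technical heart of the published proofs.

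Two smaller issues. In (c) $\Rightarrow$ (a) of part (i), hypothesis (c) concerns only projections in $A$ itself, whereas $\delta_0$ is computed on projections in matrix algebras over $\widetilde A$; your phrase ``each of which lifts'' presupposes (c) for $M_n(A)$, which is most cleanly obtained by running the cycle (c) $\Rightarrow$ (b) $\Rightarrow$ (a) (once $rr(E)=0$ is known, $rr(M_n(E))=0$ and Brown--Pedersen lifting applies in every matrix size) rather than (c) $\Rightarrow$ (a) directly. In part (ii), the equivalence that must hold in general is (a) $\Leftrightarrow$ (b) --- condition (c) is available only in the unital case --- and ``follows the same pattern'' does not address the implication $\delta_1\equiv 0\Rightarrow sr(E)=1$, whose known proof passes through the a priori bound $sr(E)\le 2$ for extensions of stable rank one algebras together with a separate connectedness argument; this direction is not a formal analogue of the real rank zero case. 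I would either cite \cite[Proposition 4]{LR} as the paper does, or repair the exponential argument along the lines of Lin's treatment before claiming a self-contained proof.
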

Then we have a direct corollary as follows which indicates that it is interesting to consider the extension with
trivial index maps.
\begin{corollary}\label{0 index sn}
Let $A,B$ be ${\rm C}^*$-algebras of stable rank one and real rank zero. Let $e\,:\,$$0\to B\to E\to A\to 0$ be an extension.
Then
$e$ has  trivial index maps if and only if $E$ has stable rank one and real rank zero.
\end{corollary}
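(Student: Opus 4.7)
The plan is to apply Proposition \ref{lin inj} directly, splitting the statement into its two components. The hypothesis that $e$ has trivial index maps is by definition the conjunction $\delta_0 \equiv 0$ and $\delta_1 \equiv 0$, while the conclusion that $E$ has stable rank one and real rank zero is the conjunction $rr(E) = 0$ and $sr(E) = 1$. Since both conditions are stated as conjunctions of two independent pieces, the corollary will follow by establishing the two equivalences $\delta_0 \equiv 0 \Leftrightarrow rr(E) = 0$ and $\delta_1 \equiv 0 \Leftrightarrow sr(E) = 1$ separately.

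For the first equivalence, I would invoke Proposition \ref{lin inj}(i), whose hypothesis requires $A$ and $B$ to have real rank zero — precisely what is assumed here; the equivalence (a)$\Leftrightarrow$(b) in part (i) then gives $\delta_0 \equiv 0 \iff rr(E) = 0$. For the second, I would invoke Proposition \ref{lin inj}(ii), whose hypothesis requires $A$ and $B$ to have stable rank one — also assumed here; the equivalence (a)$\Leftrightarrow$(b) in part (ii) gives $\delta_1 \equiv 0 \iff sr(E) = 1$. Note that part (ii)(c) is not needed and no unitality assumption on $E$ (or $A$) is required, since only (a)$\Leftrightarrow$(b) is used. Combining the two equivalences yields the statement, and there is no real obstacle since the work has been done upstream in Proposition \ref{lin inj}.
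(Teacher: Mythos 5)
Your proposal is correct and matches the paper's intent exactly: the paper presents this as a direct corollary of Proposition \ref{lin inj}, obtained by combining the equivalences (i)(a)$\Leftrightarrow$(b) and (ii)(a)$\Leftrightarrow$(b), whose hypotheses are supplied by the assumption that $A$ and $B$ have both real rank zero and stable rank one. Your observation that no unitality assumption is needed for these two equivalences is also accurate.
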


Let $A, B$ be separable ${\rm C}^*$-algebras of stable rank one and  real rank zero.
The lattice
of ideals in $A$ is isomorphic to the lattice of order ideals in ${\rm K}_0(A)$ (see \cite{Z},
\cite{Goo2}). A positive morphism of groups $\phi : {\rm K}_0(A) \to {\rm K}_0(B)$ induces a map
$I\mapsto I'$ from the ideals of $A$ to the ideals of $B$. Here $I'$ is the unique ideal in
$B$ such that the image of ${\rm K}_0(I')$ in ${\rm K}_0(B)$ is the order ideal generated by $\phi({\rm K}_0(I))$.
The map ${\rm K}_0(I)\to {\rm K}_0(A)$ is injective, since $A$ has stable rank
one.
\begin{proposition} \label{full key}
Let $A$ be a separable ${\rm C}^*$-algebra of stable rank one and real rank zero and $p$ be a projection in $\mathcal{M}(A)$. Denote  $\iota:\, pAp\to A$
to be the natural embedding map. Then ${\rm K}_0(\iota):\, {\rm K}_0(pAp)\to {\rm K}_0(A)$ is injective.

Moreover, we have the follow statements are equivalent:

(i) $pAp$ is full in $A$;

(ii) ${\rm K}_0(\iota):\, {\rm K}_0(pAp)\to {\rm K}_0(A)$ is an isomorphism.
\end{proposition}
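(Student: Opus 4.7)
The plan is to prove injectivity of $K_0(\iota)$ directly from cancellation in $A$, and then to obtain the equivalence (i)$\Leftrightarrow$(ii) from Brown's stabilization theorem (Theorem~\ref{herideal}) together with the ideal/order-ideal correspondence recalled just above the statement.

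For injectivity, I would take $x\in K_0(pAp)$ with $K_0(\iota)(x)=0$ and write $x=[q_1]-[q_2]$ for projections $q_1,q_2\in pAp\otimes\mathcal{K}$. Identifying $pAp\otimes\mathcal{K}$ via $\iota\otimes id_\mathcal{K}$ with the corner $(p\otimes 1)(A\otimes\mathcal{K})(p\otimes 1)$, each $q_i$ satisfies $q_i=(p\otimes 1)q_i=q_i(p\otimes 1)$. Since $A$ has stable rank one, $A\otimes\mathcal{K}$ has cancellation of projections, so $[q_1]=[q_2]$ in $K_0(A)$ produces a partial isometry $v\in A\otimes\mathcal{K}$ with $v^*v=q_1$ and $vv^*=q_2$. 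Replacing $v$ by $q_2 v q_1=(p\otimes 1)q_2 v q_1(p\otimes 1)$ exhibits $v\in pAp\otimes\mathcal{K}$, so already $q_1\sim q_2$ in $pAp\otimes\mathcal{K}$ and $x=0$.

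For (i)$\Rightarrow$(ii): since $A$ (and hence its hereditary subalgebra $pAp$) is separable, both algebras possess strictly positive elements; when $pAp$ is full, Theorem~\ref{herideal} produces a canonical isomorphism $pAp\otimes\mathcal{K}\cong A\otimes\mathcal{K}$, implemented by a partial isometry $v\in\mathcal{M}(A\otimes\mathcal{K})$ as in Remark~\ref{iota inj}. That remark already verifies $K_0(\iota)([q])=[v^*qv]$ on positive generators, so $K_0(\iota)$ coincides with the $K_0$-isomorphism induced by Brown's theorem, proving (ii). For (ii)$\Rightarrow$(i), let $J$ be the closed two-sided ideal of $A$ generated by $pAp$. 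Under the lattice isomorphism between closed ideals of $A$ and order ideals of $K_0(A)$ (which applies here because $A$ has stable rank one and real rank zero), $J$ corresponds to the order ideal in $K_0(A)$ generated by $K_0(\iota)(K_0^+(pAp))$. If $K_0(\iota)$ is surjective, then any $[e]\in K_0^+(A)$ can be written as $K_0(\iota)([q_1]-[q_2])$ for projections $q_1,q_2\in pAp\otimes\mathcal{K}$, giving $[e]+[q_2]=[q_1]$ in $K_0(A)$, hence $[e]\leq K_0(\iota)([q_1])$. Thus every positive class already lies in the order ideal attached to $J$, forcing $J=A$ and so $pAp$ is full.

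The main delicacy I anticipate is bookkeeping across the identifications $pAp\subset A$, $pAp\otimes\mathcal{K}\subset A\otimes\mathcal{K}$, and $p\otimes 1\in\mathcal{M}(A\otimes\mathcal{K})$, since $p$ itself need not lie in $A$; once those identifications are fixed and the cancellation property of $A\otimes\mathcal{K}$ is invoked, the three parts fit together without any further technical input beyond Theorem~\ref{herideal} and the ideal/order-ideal correspondence.
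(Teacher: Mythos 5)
Your proof is correct, and for the ``moreover'' part it follows the same route as the paper: (i)$\Rightarrow$(ii) via Brown's theorem (Theorem~\ref{herideal}) and Remark~\ref{iota inj}, and (ii)$\Rightarrow$(i) via the correspondence between closed ideals of $A$ and order ideals of ${\rm K}_0(A)$, using real rank zero to know that ${\rm K}_0^+(A)$ generates ${\rm K}_0(A)$. Where you genuinely diverge is the injectivity of ${\rm K}_0(\iota)$. The paper factors $\iota$ through the ideal $I_p$ generated by $pAp$, obtains that ${\rm K}_0(pAp)\to{\rm K}_0(I_p)$ is an isomorphism from Brown's theorem, and that ${\rm K}_0(I_p)\to{\rm K}_0(A)$ is injective from the vanishing of the index map $\delta_1$ for the extension $0\to I_p\to A\to A/I_p\to 0$ (Proposition~\ref{lin inj}(ii)). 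You instead argue directly: write a kernel element as $[q_1]-[q_2]$ with $q_i\in pAp\otimes\mathcal{K}$, use cancellation in $A\otimes\mathcal{K}$ (from stable rank one) to produce a partial isometry $v$ with $v^*v=q_1$, $vv^*=q_2$, and observe $v=q_2vq_1$ already lies in the corner $(p\otimes 1)(A\otimes\mathcal{K})(p\otimes 1)$, so the equivalence holds in $pAp\otimes\mathcal{K}$. This is more elementary and self-contained (it bypasses both Brown's theorem and Proposition~\ref{lin inj} for this step); its only additional input is that every class in ${\rm K}_0(pAp)$ is a difference of classes of projections in $pAp\otimes\mathcal{K}$, which is where real rank zero (inherited by the hereditary subalgebra $pAp$, together with separability) enters your argument, whereas the paper's route uses real rank zero only implicitly through the quoted results. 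Both arguments are valid under the stated hypotheses.
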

\begin{proof}
Consider the natural inclusion diagram
$$
\xymatrixcolsep{2pc}
\xymatrix{
{\,\,pAp\,\,} \ar[r]^-{\iota}\ar[d]_-{\iota_1}
& {\,\,A,\,\,}
 \\
{\,\,I_p\,\,} \ar[ur]_-{\iota_2}}
$$
where $I_p$ is the ideal of $A$ generated by $pAp$.
And it induces the commutative diagram as follows
$$
\xymatrixcolsep{2pc}
\xymatrix{
{\,\,{\rm K}_0(pAp)\,\,} \ar[r]^-{{\rm K}_0(\iota)}\ar[d]_-{{\rm K}_0(\iota_1)}
& {\,\,{\rm K}_0(A).\,\,}
 \\
{\,\,{\rm K}_0(I_p)\,\,} \ar[ur]_-{{\rm K}_0(\iota_2)}}
$$
By
Theorem \ref{herideal} and Remark \ref{iota inj}, we have
$pAp\otimes \mathcal{K}$ and $I_p\otimes \mathcal{K}$
are canonically isomorphic and
${\rm K}_0(\iota_1)$ is an isomorphism.
By Proposition \ref{lin inj}(ii)(a), we have ${\rm K}_0(\iota_2)$ is injective.
Then the composed map ${\rm K}_0(\iota)$ is injective.

For (i) $\Rightarrow$ (ii), $pAp$ is full in $A$ implies $I_p=A$,
and hence, ${\rm K}_0(\iota_2)$ is an isomorphism.
Thus, the composed map ${\rm K}_0(\iota)$ is an isomorphism.

For (ii) $\Rightarrow$ (i),  we have shown that ${\rm K}_0(\iota):\, {\rm K}_0(pAp)\to {\rm K}_0(A)$
is  an injective map. By assumption, ${\rm K}_0(\iota):\, {\rm K}_0(pAp)\to {\rm K}_0(A)$ is an isomorphism.
This is equal to say that the ideal generated by ${\rm K}_0(\iota)({\rm K}_0(pAp))$ in ${\rm K}_0(A)$ is ${\rm K}_0(A)$ itself.
Since $A$ a separable ${\rm C}^*$-algebra of stable rank one and real rank zero, we have ${\rm Lat}(A)\cong {\rm Lat}({\rm K}_0(A))$.
In addition, the above argument implies the ideal generated by $pAp$ in $A$ is $A$ itself.

\end{proof}

\begin{lemma}\label{f lemma}
Let $A,B$ be nuclear, separable ${\rm C}^*$-algebras of stable rank one and real rank zero. Suppose that $A$ is unital and $B$ is stable. 
Let $e$ be a unital essential  extension
$$
0\to B\to E\to A\to 0.
$$
For any projection $f$ in $M_k(E)$ with $1_E\leq f\leq  1_{M_k(E)}$
for some $k\in\mathbb{N}$, we have  ${f M_k(B)f}\cong B$ is  stable and $B$ is full in $f M_k(B)f$.
\end{lemma}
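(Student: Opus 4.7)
The plan is to use $1_E\le f$ to locate a full copy of $B$ inside $fM_k(B)f$, apply Brown's theorem to obtain a stable isomorphism $fM_k(B)f\otimes\mathcal{K}\cong B$, and finally upgrade this to an honest isomorphism using R\o rdam's scale criterion (Proposition \ref{rodam p1}).

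Since $e$ is a unital essential extension, $E$ embeds canonically in $\mathcal{M}(B)$, so $1_E$ is a projection in $\mathcal{M}(B)$. Because $B$ is an ideal of $E$ with unit $1_E$, $1_E b 1_E = b$ for every $b\in B$. Identifying $1_E$ with $e_{11}\otimes 1_E\in M_k(\mathcal{M}(B))=\mathcal{M}(M_k(B))$, this gives $1_E M_k(B)1_E = B$ in the $(1,1)$-corner. Since $f\in M_k(E)\subset\mathcal{M}(M_k(B))$ and $1_E\le f\le 1_{M_k(E)}$, the hereditary subalgebra $fM_k(B)f$ contains this copy of $B$. The $(1,1)$-corner $e_{11}\otimes B$ generates $M_k(B)=M_k\otimes B$ as a closed two-sided ideal (as $M_k$ is simple), so $fM_k(B)f$ is full in $M_k(B)$. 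Theorem \ref{herideal} together with stability of $B$ then yields
$$
(fM_k(B)f)\otimes\mathcal{K}\;\cong\;M_k(B)\otimes\mathcal{K}\;\cong\;B.
$$

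To promote this to $fM_k(B)f\cong B$, I will show $fM_k(B)f$ is itself stable. As a separable hereditary subalgebra of $M_k(B)\cong B$, it inherits stable rank one, real rank zero, cancellation, and a countable approximate unit of projections; by Proposition \ref{full key} the inclusion induces an isomorphism identifying $({\rm K}_0(fM_k(B)f),{\rm K}_0^+(fM_k(B)f))$ with $({\rm K}_0(B),{\rm K}_0^+(B))$. Since $B$ is stable, Proposition \ref{rodam p1} gives $\Sigma\,B={\rm K}_0^+(B)$, so every positive $K_0$ class of $fM_k(B)f$ is represented by a projection in $B\subseteq fM_k(B)f$. Thus $\Sigma(fM_k(B)f)={\rm K}_0^+(fM_k(B)f)$, and the converse direction of Proposition \ref{rodam p1} forces $fM_k(B)f$ to be stable. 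Combining with the stable isomorphism above, $fM_k(B)f\cong (fM_k(B)f)\otimes\mathcal{K}\cong B$.

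For the fullness of $B$ in $fM_k(B)f$: since $fM_k(B)f$ is full hereditary in $M_k(B)$, $\mathcal{M}(fM_k(B)f)=f\mathcal{M}(M_k(B))f$, so $1_E=f1_Ef$ lies in $\mathcal{M}(fM_k(B)f)$ with $1_E\cdot fM_k(B)f\cdot 1_E = B$. The composite $B\hookrightarrow fM_k(B)f\hookrightarrow M_k(B)$ is the full corner embedding and hence induces an isomorphism on ${\rm K}_0$; the second arrow does as well by Proposition \ref{full key}, so the first arrow ${\rm K}_0(B)\to{\rm K}_0(fM_k(B)f)$ must be an isomorphism. Invoking Proposition \ref{full key} in the reverse direction yields that $B$ is full in $fM_k(B)f$. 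The technical crux is the stability step, where the inclusion $B\subseteq fM_k(B)f$ forced by $1_E\le f$ is precisely what allows R\o rdam's scale equality to be transported from $B$ to $fM_k(B)f$.
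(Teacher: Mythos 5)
Your argument is correct and follows essentially the same route as the paper: identify the corner copy of $B$ inside $fM_k(B)f$ via $1_E\le f$, use Brown's theorem and the $K_0$-triangle $B\to fM_k(B)f\to M_k(B)$ together with Proposition \ref{full key} to get fullness, and transport R\o rdam's scale equality $\Sigma\,B={\rm K}_0^+(B)$ through the inclusions to conclude that $fM_k(B)f$ is stable and hence isomorphic to $B$. No gaps.
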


\begin{proof}
Consider the natural inclusion diagram
$$
\xymatrixcolsep{2pc}
\xymatrix{
{\,\,B\,\,} \ar[r]^-{\iota_3}\ar[d]^-{\iota_1}
& {\,\,M_k(B),\,\,}
 \\
{\,\,fM_k(B)f\,\,} \ar[ur]_-{\iota_2}}
$$
where $B$ is regarded as the first corner
of $M_k(B)$. It induces the following commutative diagram
$$
\xymatrixcolsep{2pc}
\xymatrix{
{\,\,{\rm K}_0(B)\,\,} \ar[r]^-{{\rm K}_0(\iota_3)}\ar[d]^-{{\rm K}_0(\iota_1)}
& {\,\,{\rm K}_0(M_k(B)).\,\,}
 \\
{\,\,{\rm K}_0(fM_k(B)f)\,\,} \ar[ur]_-{{\rm K}_0(\iota_2)}}
$$

Since $B$ is full in $M_k(B)$, then $fM_k(B)f$ is also full in $M_k(B)$. By
Theorem \ref{herideal} and Remark \ref{iota inj},  we have
both ${\rm K}_0(\iota_2)$ and ${\rm K}_0(\iota_3)$ are isomorphisms.
By commutativity, we have ${\rm K}_0(\iota_1)$ is also an isomorphism, by  Proposition \ref{full key},
we have $B$ is full in $fM_k(B)f$.







For any projection $p$ in $B$, as 
${\rm K}_0(\iota_1)([p])=[\iota_1(p)]$,
we have
$${\rm K}_0(\iota_2)\circ {\rm K}_0(\iota_1)(\Sigma \,B)\subset{\rm K}_0(\iota_2) \left(\Sigma\, {(fM_k(B)f)}\right)\subset \Sigma \,(M_k(B)).$$
Apply Proposition \ref{rodam p1} for $B$ and $M_k(B)$, then
$$
\Sigma\, B= {\rm K}_0^+(B)= \Sigma\,( M_k(B)).
$$
Since $B$ is full in $M_k(B)$, by
Theorem \ref{herideal} and Remark \ref{iota inj}, we have
$$
{\rm K}_0(\iota_2)\circ {\rm K}_0(\iota_1)(\Sigma\, B)=\Sigma\,( M_k(B)),$$ we obtain $${\rm K}_0(\iota_1)(\Sigma \,B)=\Sigma\,{(fM_k(B)f)},$$
that is, $$
\Sigma{(fM_k(B)f)}={\rm K}_0(\iota_1)({\rm K}_0^+ (B))={\rm K}_0^+ (fM_k(B)f).
$$
By Proposition \ref{rodam p1}, $fM_k(B)f$ is stable, and hence, by Theorem \ref{herideal},
$$ fM_k(B)f\cong B.$$

\end{proof}

\begin{lemma}\label{np lemma}
Let $A,B$ be nuclear, separable ${\rm C}^*$-algebras of stable rank one and real rank zero. Suppose that $A$ is unital,  $B$ is stable and $({\rm K}_0(B),{\rm K}_0^+(B))$  is weakly unperforated. Let $e$ be a unital essential extension with trivial index maps
$$
0\to B\to E\to A\to 0.
$$
Suppose $p,f$ are projections in $M_k(E)$ satisfying
$$
[1_E]\leq n\cdot [p]=[f],\,\,p\leq f,\,\, 1_E\leq f\leq 1_{M_k(E)}
$$ for some $n,k \in\mathbb{N}$. Then $ {pM_k(B)p}\cong
B$ is stable and  ${pM_k(B)p}$ is full in $fM_k(B)f$.

\end{lemma}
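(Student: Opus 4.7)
The plan is to use $n[p]=[f]$ to realize $M_n(pM_k(B)p)$ as $fM_k(B)f$ via cancellation inside $M_{nk}(E)$, derive stability of $pM_k(B)p$ from Corollary~\ref{cor K0}, and then establish fullness via Proposition~\ref{full key} by matching the natural inclusion with the conjugation isomorphism on $K_0$.

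First, I would invoke Corollary~\ref{0 index sn} so that $E$ has stable rank one, which makes $M_{nk}(E)$ cancellative. Setting $P:=\mathrm{diag}(p,\ldots,p)\in M_n(M_k(E))$, the relation $[P]=n[p]=[f]$ in $K_0(M_{nk}(E))$ produces a partial isometry $w\in M_{nk}(E)$ with $w^{*}w=P$ and $ww^{*}=f$; since $M_k(B)\triangleleft M_k(E)$, conjugation by $w$ restricts to a $*$-isomorphism
$$
M_n(pM_k(B)p)=PM_{nk}(B)P\xrightarrow{\mathrm{Ad}(w)}fM_{nk}(B)f=fM_k(B)f,
$$
and Lemma~\ref{f lemma} identifies the right-hand side with $B$. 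Consequently $M_n(pM_k(B)p)$ is stable; and since $pM_k(B)p$ is a separable hereditary subalgebra of $M_k(B)$ (hence has stable rank one and real rank zero) whose ordered $K_0$ is identified with the weakly unperforated $(K_0(B),K_0^{+}(B))$, Corollary~\ref{cor K0} gives that $pM_k(B)p$ itself is stable, so $pM_k(B)p\cong B$.

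For fullness, $p\le f$ and $M_k(B)\triangleleft M_k(E)$ realize $p$ as a projection in $\mathcal{M}(fM_k(B)f)$ acting by two-sided multiplication, with $p(fM_k(B)f)p=pM_k(B)p$, so Proposition~\ref{full key} reduces the claim to showing that the natural inclusion $\iota:pM_k(B)p\hookrightarrow fM_k(B)f$ induces an isomorphism on $K_0$. Setting $v_1:=w(e_{11}\otimes p)\in fM_k(E)f$ with $v_1^{*}v_1=p$ and $v_1 v_1^{*}\le f$, the composition
$$
\psi\,:\,pM_k(B)p\hookrightarrow M_n(pM_k(B)p)\xrightarrow{\mathrm{Ad}(w)}fM_k(B)f
$$
coincides with $\mathrm{Ad}(v_1)$ and is a $K_0$-isomorphism by the preceding step. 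For any projection $r\in M_{\infty}(pM_k(B)p)$, the element $u:=v_1 r$ lies in $M_{\infty}(fM_k(B)f)$ (the ideal property of $B\triangleleft E$ puts it in $M_{\infty}(M_k(B))$, and $fv_1=v_1$ together with $rf=r$ give $fuf=u$) with $u^{*}u=r$ and $uu^{*}=v_1 r v_1^{*}$, so $[\iota(r)]=[r]=[v_1 r v_1^{*}]=[\psi(r)]$; hence $K_0(\iota)=K_0(\psi)$ is an isomorphism.

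The principal obstacle is this last matching: $\iota$ and $\mathrm{Ad}(v_1)$ are distinct $*$-homomorphisms whose images are the different hereditary subalgebras $pM_k(B)p$ and $v_1 v_1^{*}\,M_k(B)\,v_1 v_1^{*}$ of $fM_k(B)f$, and the partial isometry $v_1$ implementing $\mathrm{Ad}(v_1)$ lives only in $M_k(E)$, not in $M_k(B)$; establishing agreement on $K_0(fM_k(B)f)$ depends on carefully using $B\triangleleft E$ combined with $p\le f$ to ensure that $v_1 r$ actually lands inside $fM_k(B)f\otimes\mathcal{K}$ for each projection $r$ in the stabilization of $pM_k(B)p$.
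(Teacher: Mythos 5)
Your proof is correct, and its core step --- using cancellation in $M_{nk}(E)$ (available since $E$ has stable rank one by Corollary \ref{0 index sn}) to realize $fM_k(B)f$ as $M_n(pM_k(B)p)$, and then descending stability through weak unperforation --- is exactly the mechanism of the paper's proof: the Corollary \ref{cor K0} you quote is precisely the paper's combination of Propositions \ref{rodam p1} and \ref{rodam p2}, which the paper applies after transferring the scale along the same isomorphism (implemented there by matrix units $v_iv_j^*$ rather than by a single $w$). Where you genuinely diverge is the fullness claim, which the paper handles first and by a direct ideal computation: writing $f=p_1+\cdots+p_n$ with the $p_i$ mutually orthogonal, $p_1=p$, and partial isometries $v_j$ satisfying $v_jv_j^*=p$, $v_j^*v_j=p_j$, any $x\in fM_k(B)f$ decomposes as $\sum_{i,j}p_ixp_j=\sum_{i,j}v_i^*\bigl(v_ip_ixp_jv_j^*\bigr)v_j$ with each bracketed term in $pM_k(B)p$, so $x$ lies in the ideal generated by $pM_k(B)p$ --- two lines, no $K$-theory. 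You instead route fullness through Proposition \ref{full key}, which obliges you to prove that the natural inclusion $\iota$ and the corner embedding followed by $\mathrm{Ad}(w)$ agree on $\mathrm{K}_0$; your verification via $u=v_1r$ (using $B\triangleleft E$ and $p\le f$ to keep $u$ inside the stabilization of $fM_k(B)f$) is sound, and you correctly flag it as the delicate point, but it is the longest part of your argument and the paper's computation avoids it entirely. Both routes are valid; the paper's is more elementary on fullness, while yours packages the stability step more cleanly by citing Corollary \ref{cor K0} instead of redoing the scale calculation, and it derives weak unperforation of $\mathrm{K}_0(pM_k(B)p)$ from $M_n(pM_k(B)p)\cong B$ directly, so no circularity arises from postponing fullness.
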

\begin{proof}
By Lemma \ref{f lemma}, we have
$f M_k(B)f\cong B$ and $B$ is full in $f M_k(B)f$.
Since $[f]=n\cdot [p]$, $1_E\leq f\leq 1_{M_k(E)}$ and $p\leq f$,
noting that $E$ has stable rank one,
we have $\underbrace{p\oplus p\oplus \cdots \oplus p}_{n-1}$ is Murray--von Neumann equivalent to $f-p$ in $M_l(E)$ for some $l$, set $p_1=p$, thus
we have
$$
f=p_1+p_2+\cdots+p_{n},
$$
where each $p_i\in M_k(E) $ is a subprojection of $f$, $p_i\sim p_1$  and $p_ip_j=0$  for any $i\neq j$.
Denote $v_{j}$ the partial isometry in  $M_k(E)$ with
$$
v_{j}v_{j}^*=p_1,\quad v_{j}^*v_{j}=p_j\quad {\rm ~~for~~ any}~~ j=1,2,\cdots,n.
$$
Now we show $pM_k(B)p$ is full in $f M_k(B)f$.
Note that $f M_k(B)f$ is also an ideal of $f M_k(E)f$, so we
only need to show that $f M_k(B)f$ is the ideal of $f M_k(E)f$
generated by $pM_k(B)p$. (If $C,D,W$ are ${\rm C}^*$-algebras with $D$ is an ideal of $W$ and
$C$ is an ideal of $D$, we will have $C$ is also an ideal of $W$.)

For any $x\in f M_k(B)f$,
then $x=\sum_{i=1}^n\sum_{j=1}^n p_i x p_j$.
For any $i,j$, $p_i x p_j\in f M_k(B)f$, we have $$v_i p_i x p_j v_j^*\in p M_k(B)p,$$
which implies
$$v_i^*v_i p_i x p_j v_j^*v_j=p_i x p_j,$$
i.e., $p_i x p_j$ is contained in the ideal of $f M_k(E)f$ generated by $p M_k(B)p$.
Then we have  $x$ is contained in the ideal of $f M_k(B)f$ generated by $p M_k(B)p$.
In general, we have $pM_k(B)p$ is full in $f M_k(B)f$.

Moreover, regarding $\{v_iv_j^*\}$ as the matrix units and $pM_k(B)p$
as the first corner, we have
$$ pM_k(B)p \otimes M_n\cong f M_k(B)f$$
naturally.

Since $pM_k(B)p$ is full in $f M_k(B)f$, by Theorem \ref{herideal}, we have
$$
pM_k(B)p\otimes \mathcal{K}\cong  fM_k(B)f\otimes \mathcal{K}\cong B.
$$
Then
$({\rm K}_0(pM_k(B)p),{\rm K}_0^+(pM_k(B)p))\cong({\rm K}_0(B),{\rm K}_0^+(B)) $
is weakly unperforated.

Denote the natural embedding map $\iota:\,pM_k(B)p \to fM_k(B)f$,
by Theorem \ref{herideal} and Remark \ref{iota inj}, we
identify ${\rm K}_0^+(pM_k(B)p)$ with ${\rm K}_0^+(fM_k(B)f)$ through ${\rm K}_0(\iota)$.

Apply Proposition \ref{rodam p1} for $fM_k(B)f$,  we obtain
$$
n\,\widehat{\cdot}\, \Sigma\, (pM_k(B)p)=\Sigma (fM_k(B)f)={\rm K}_0^+(fM_k(B)f)={\rm K}_0^+(pM_k(B)p).
$$
Then by Proposition \ref{rodam p2}, we have
$$\Sigma\, (pM_k(B)p)={\rm K}_0^+(pM_k(B)p).$$
By Proposition \ref{rodam p1}, $pM_k(B)p$ is stable,  and hence, $pM_k(B)p\cong B$.

\end{proof}

\begin{theorem}\label{main thm}
Let $A,B$ be nuclear, separable ${\rm C}^*$-algebras of stable rank one and real rank zero. Suppose that $A$ is unital simple,  $B$ is stable and $({\rm K}_0(B),{\rm K}_0^+(B))$  is weakly unperforated. Let $e$ be a unital extension with trivial index maps
$$
0\to B\rightarrow E\xrightarrow{\pi} A\to 0.
$$
Then $e$ is absorbing.
\end{theorem}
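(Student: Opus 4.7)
The plan is to apply Theorem~\ref{ek thm} by showing $e$ is purely large. Fix $c\in E\setminus B$ and let $H=\overline{cc^*Ecc^*}$, so $\overline{cBc^*}=B\cap H$. By Corollary~\ref{0 index sn}, $E$ and hence $H$ have stable rank one and real rank zero, so $H$ admits an approximate unit of projections. Since the extension is essential ($A$ is unital simple and $B\neq 0$), the same argument that rules out $cB=0$ gives $\overline{cBc^*}\neq 0$; and real rank zero of $H$ furnishes a projection $p_0\in H$ with $q_0:=\pi(p_0)\neq 0$. By simplicity of $A$, pick $n\in\mathbb N$ with $n[q_0]\ge[1_A]$ in $K_0(A)$.

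The technical heart is to enlarge $p_0$ to a projection $p\in H$ with $n[p]\ge[1_E]$ in $K_0(E)$, by adding to $p_0$ a projection $q$ in $B\cap H$ orthogonal to $p_0$. Note that $(1_E-p_0)(B\cap H)(1_E-p_0)$ is a hereditary sub-$\mathrm C^*$-algebra of $B\cap H$ (expanding $(1_E-p_0)h(1_E-p_0)=h-p_0h-hp_0+p_0hp_0$ shows this lies in $H$, since $H$ is a sub-$\mathrm C^*$-algebra containing $h$ and $p_0$), and its projections are orthogonal to $p_0$. To choose $[q]\in K_0^+(B)$ appropriately, I would write $n[p_0]-[1_E]=[P_0]+\iota_*(\beta)$ with $[P_0]\in K_0^+(E)$ a projection lift of the positive class $n[q_0]-[1_A]$ and $\beta\in K_0(B)$; such a decomposition is possible because $\pi_*$ is surjective with kernel $\iota_*(K_0(B))$ under the trivial index maps. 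Directedness of $K_0(B)$ --- a consequence of $\Sigma B=K_0^+(B)$ in Proposition~\ref{rodam p1} --- combined with weak unperforation allows one to select $[q]\in K_0^+(B)$ with $n[q]+\beta\in K_0^+(B)$, and to realize $q$ inside $(1_E-p_0)(B\cap H)(1_E-p_0)$. Setting $p:=p_0+q\in H$, one verifies $n[p]\ge[1_E]$ in $K_0(E)$.

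With $p$ in hand, real rank zero of $M_k(E)$ provides a projection $g\in(1_{M_k(E)}-1_E)M_k(E)(1_{M_k(E)}-1_E)$ with $[g]=n[p]-[1_E]$, and $f:=1_E+g$ satisfies $p\le 1_E\le f\le 1_{M_k(E)}$ and $[f]=n[p]$. Lemma~\ref{np lemma} now gives $pM_k(B)p=pBp\cong B$ is stable and full in $fM_k(B)f$, and Lemma~\ref{f lemma} supplies the full inclusion $B\hookrightarrow fM_k(B)f$. Approximating $b\in B\subset fM_k(B)f$ by $b=\lim\sum x_id_iy_i$ with $d_i\in pBp$ and $x_i,y_i\in fM_k(B)f$, and conjugating by $1_E$ on both sides using $1_Ef=f1_E=1_E$, rewrites each summand as an element of $B\cdot pBp\cdot B$, so $pBp$ is full in $B$. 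Since $p\in H$, we have $pBp\subseteq\overline{cBc^*}$, providing the required stable sub-$\mathrm C^*$-algebra full in $B$. Theorem~\ref{ek thm} then concludes that $e$ is absorbing.

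The main obstacle is the $K$-theoretic adjustment in the second paragraph: the positive cone of $K_0(E)$ is not simply the $\pi_*$-preimage of $K_0^+(A)$, so the inequality $n[q_0]\ge[1_A]$ cannot be lifted directly to $K_0(E)$; one must absorb the potentially non-positive $K_0(B)$-contribution by choosing a projection $q$ of sufficiently large class in $(1_E-p_0)(B\cap H)(1_E-p_0)$, which is exactly where the stability of $B$, the hypothesis $\Sigma B=K_0^+(B)$, and the weak unperforation of $K_0(B)$ come in.
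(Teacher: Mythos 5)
Your overall strategy coincides with the paper's (purely largeness via Theorem~\ref{ek thm}, a projection $p_0\in\overline{cEc^*}\setminus B$, the inequality $n[\pi(p_0)]\ge[1_A]$ from simplicity, a lift to $K_0^+(E)$ modulo $\iota_*(K_0(B))$ using the trivial index maps, and then Lemmas~\ref{f lemma} and~\ref{np lemma}), and your concluding fullness argument via compression by $1_E$ is sound. But the step you yourself identify as the technical heart has a genuine gap: you propose to absorb the $K_0(B)$-defect $\beta$ by a projection $q$ of large class located inside $(1_E-p_0)(B\cap H)(1_E-p_0)$, i.e.\ inside $\overline{cBc^*}$ and orthogonal to $p_0$. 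There is no reason such a $q$ exists, and in the most basic case it cannot: if $c=p_0$ is itself a projection, then $H=p_0Ep_0$, so $(1_E-p_0)(B\cap H)(1_E-p_0)=0$ and the corner you want to draw $q$ from is trivial. More to the point, asserting that $\overline{cBc^*}$ contains projections of prescribed large class is essentially the conclusion of the theorem (that $\overline{cBc^*}$ contains a stable subalgebra full in $B$), so the argument is circular exactly where it needs to do work. Directedness and weak unperforation of $K_0(B)$ let you choose the \emph{class} $[q]$, but not realize it inside $H$.

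The paper's proof avoids this by never enlarging $p_0$ inside $H$. It adjoins the auxiliary projection $d\in B$ (living anywhere in $B$, not in $H$) externally, forming $p\oplus d\le f$ in a matrix algebra $M_k(E)$ with $n[p]+n[d]\ge 2[1_E]=[f]$ wait---with $[f]=n[p\oplus d]$ dominating $[1_E]$---so that Lemma~\ref{np lemma} applies to the pair $(p\oplus d,f)$. It then extracts information about $pBp$ \emph{alone} by a $K_0$-cancellation trick: for every projection $q\in B$ one gets $[q]\le[p]+[d]$, and applying this to a projection $\widetilde q$ with $[\widetilde q]=[q]+[d]$ (which exists in $B$ because $\Sigma B=K_0^+(B)$) and cancelling $[d]$ in the group $K_0(E)$ yields $[q]\le[p]$. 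With cancellation of projections in $M_k(E)$ this puts a copy of every projection of $B$ under $p$, giving $\Sigma(pBp)=K_0^+(B)$, hence stability and fullness of $pBp\subseteq\overline{cBc^*}$ by Proposition~\ref{rodam p1} and Proposition~\ref{full key}. You should replace your second paragraph with an argument of this kind; the rest of your proposal then goes through.
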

\begin{proof}
We only need  to prove $e$ is purely large.

For any $c\in E\backslash B$, $\pi(c)\neq 0$. Since $E$ has real rank zero, then there exists a nonzero projection $p\in \overline{cEc^*}$ such that $p\in E\backslash B$.
Otherwise, if such $p$ doesn't exist, by $rr(\overline{cEc^*})=0$, we will have $\overline{cEc^*}\subset B$,
which implies $c\in B$ and forms a contradiction.

Note that $pBp\subset \overline{c B c^*}$, we will show that $pBp$ is stable and is full in $B$.

Since $p\in E\backslash B$, $\pi(p)$ is a nonzero projection in $A$. As $A$ is unital simple and has stable rank one, there exists an integer $n$ such that $n\cdot [\pi(p)]\geq 2[1_A]$. Now we have
$$
{\rm K}_0(\pi)(n\cdot[p]-2[1_E])\geq 0\quad {\rm in}\,\,{\rm K}_0^+(A).
$$
As $E$  has real rank zero, by Proposition \ref{lin inj}(i)(c),
there exists $x\in {\rm K}_0^+(E)$ such that
$$
{\rm K}_0(\pi)(n\cdot[p]-2[1_E])={\rm K}_0(\pi)(x).
$$
Recall that $e$ has trivial index maps, then the following is an exact sequence
$$
0\to {\rm K}_0(B)\rightarrow {\rm K}_0(E)\xrightarrow{{\rm K}_0(\pi)}
{\rm K}_0(A)\to 0.
$$
So ${\rm K}_0(B)$ can be identified as the subgroup of ${\rm K}_0(E)$.
Then from the exactness, there exist projections $d,r$ in $B$ 
such that
$$n[p]-2[1_E]+[d]-[r]=x\geq 0 \quad {\rm in}\,\,{\rm K}_0^+(E).$$
Moreover, we have
$$n[p]+n[d]\geq 2[1_E] \quad {\rm in}\,\,{\rm K}_0^+(E).$$
Since $E$ has stable rank one,  we can lift $n[p]+n[d]$ to a projection $f$  in $M_k(E)$ ($k\geq 2$) with
$$
1_E\leq1_{M_2(E)}\leq f\leq 1_{M_k(E)}.
$$
 Since $p\leq 1_E$ and $d\leq 1_E$, we regard $p\oplus d$ as a
projection in $M_2(E)\subset M_k(E)$. Then we have
$$
p\oplus d\leq1_{M_2(E)}\leq f \quad{\rm in}\,\, M_k(E).
$$
Apply Lemma \ref{np lemma}, we have
$
(p\oplus d)M_k(B)(p\oplus d)
\cong B
$
and
$(p\oplus d)M_k(B)(p\oplus d)$ is full in $fM_k(B)f$.

Consider the following commutative diagram
$$
\xymatrixcolsep{2pc}
\xymatrix{
{\,\,(p\oplus d)M_k(B)(p\oplus d)\,\,} \ar[r]^-{\iota_6}\ar[d]^-{\iota_1}
& {\,\,M_k(E)\,\,}
& {\,\,E\,\,} \ar[l]_-{\iota_4}
 \\
{\,\,fM_k(B)f\,\,} \ar[r]_-{\iota_2}
& {\,\,M_k(B)\,\,}\ar[u]^-{\iota_5}
& {\,\,B\,\,}\ar[l]^-{\iota_3} \ar[u]_-{\iota_7},}
$$
where for each $i$, $\iota_i$ is  the natural embedding map ($B$ and $E$ are regarded as the first corners
of $M_k(B)$ and $M_k(E)$, respectively).
Then it induces a  commutative diagram as follows
$$
\xymatrixcolsep{2pc}
\xymatrix{
{\,\,{\rm K}_0((p\oplus d)M_k(B)(p\oplus d))\,\,} \ar[r]^-{{\rm K}_0(\iota_6)}\ar[d]^-{{\rm K}_0(\iota_1)}
& {\,\,{\rm K}_0(M_k(E))\,\,}
& {\,\,{\rm K}_0(E)\,\,} \ar[l]_-{{\rm K}_0(\iota_4)}
 \\
{\,\,{\rm K}_0(fM_k(B)f)\,\,} \ar[r]_-{{\rm K}_0(\iota_2)}
& {\,\,{\rm K}_0(M_k(B))\,\,}\ar[u]^-{{\rm K}_0(\iota_5)}
& {\,\,{\rm K}_0(B)\,\,} \ar[l]^-{{\rm K}_0(\iota_3)} \ar[u]_-{{\rm K}_0(\iota_7)}.}
$$

Since $E$ has stable rank one and real rank zero, by Proposition \ref{full key}, on one hand, for each $1\leq i\leq7$, we have
${\rm K}_0(\iota_i)$ is injective;
 on the other hand, by the fullness we have obtained, for each $1\leq i\leq4$,
 we have ${\rm K}_0(\iota_i)$ is an isomorphism.

 Note that $(p\oplus d)M_k(B)(p\oplus d)$ is stable and also has stable rank one,
then for any projection $q\in B$, we can lift
$${\rm K}_0(\iota_1)^{-1}\circ {\rm K}_0(\iota_2)^{-1}\circ {\rm K}_0(\iota_3)([q])\in {\rm K}_0^+((p\oplus d)M_k(B)(p\oplus d))$$
to a projection $q'\in (p\oplus d)M_k(B)(p\oplus d)$. Then
$$
\iota_6(q')\leq p\oplus d \quad{\rm in}\,\,M_k(E).
$$
Combining with the commutativity, we have
$$
{\rm K}_0(\iota_5)\circ {\rm K}_0(\iota_3)([q])= {\rm K}_0(\iota_6)([q']) \leq [p\oplus d]=[p]+[d]
\quad{\rm in}\,\,{\rm K}_0^+(M_k(E)).
$$
From the arbitrariness of $q$, by choosing  $\widetilde{q}$ with $[\widetilde{q}]=[q]+[d]$,
we have
$$ {\rm K}_0(\iota_5)\circ {\rm K}_0(\iota_3)([q])\leq [p]\in {\rm K}_0^+(M_k(E)).\qquad(*)$$
Note that we can identify
${\rm K}_0(B)$ as the subgroup of ${\rm K}_0(E)$ through the injective map ${\rm K}_0(\iota_5)\circ {\rm K}_0(\iota_3)$.

As $p\leq1_E$, $pM_k(B)p$ is naturally identified as $pBp$ in $B$.
Denote $\iota:\,pBp\to B$ the natural inclusion.
Since $B$ has stable rank one, we can identify ${\rm K}_0^+(pBp)$ as a sub-semigroup of ${\rm K}_0^+(B)$ (Proposition \ref{full key}).
Now $(*)$ implies $\Sigma\,(pBp)={\rm K}_0^+(B)$, which means
${\rm K}_0^+(pBp)={\rm K}_0^+(B)$.
Then by Proposition \ref{rodam p1}, we have $pBp$ is stable, and  hence, $pBp\cong B.$
From Proposition \ref{full key}, we get $pBp$ is full in $B$. Now $e$ is purely large, then by Theorem \ref{ek thm}, $e$ is absorbing.

\end{proof}
\begin{corollary}\label{dasheying}
  Suppose $A,B,E$  satisfy the assumption of Theorem \ref{main thm}.
  Then for any $p\in \mathcal{P}(E\backslash B)$ and $q\in \mathcal{P} (B)$, we have $[q]\leq [p]$ in ${\rm K}_0(E)$.
\end{corollary}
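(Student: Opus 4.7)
The plan is to reuse the structural information about $pBp$ that was extracted inside the proof of Theorem~\ref{main thm}, and then convert it into the desired comparison by a short $K_0$-lifting argument.

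First, I would observe that the chain of reasoning in the proof of Theorem~\ref{main thm}, starting from the sentence ``Since $p\in E\setminus B$, $\pi(p)$ is a nonzero projection in $A$'', depends only on $p$ being a projection in $E\setminus B$ and not on the particular $c\in E\setminus B$ from which $p$ was originally extracted. Applied to our $p\in\mathcal{P}(E\setminus B)$, it yields three conclusions: $pBp$ is stable, $pBp\cong B$, and $pBp$ is full in $B$. Consequently, by Proposition~\ref{full key}, the natural inclusion $\iota:pBp\hookrightarrow B$ induces an isomorphism $\mathrm{K}_0(\iota):\mathrm{K}_0(pBp)\xrightarrow{\cong}\mathrm{K}_0(B)$, and by Proposition~\ref{rodam p1} applied to the stable algebra $pBp$ (which has stable rank one and thus cancellation, and admits a countable approximate unit of projections), we have $\Sigma(pBp)=\mathrm{K}_0^+(pBp)$.

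Next, I would translate $[q]$ into a projection sitting below $p$. Explicitly, set $g:=\mathrm{K}_0(\iota)^{-1}([q])\in \mathrm{K}_0^+(pBp)=\Sigma(pBp)$; the latter equality permits me to lift $g$ to an honest projection $q'\in pBp$. By construction $[q']=[q]$ in $\mathrm{K}_0(B)$, and pushing forward along the inclusion $B\hookrightarrow E$ gives $[q']=[q]$ in $\mathrm{K}_0(E)$ by functoriality. On the other hand, $q'\in pBp$ forces $q'\leq p$ as projections in $E$, so $[q']\leq[p]$ in $\mathrm{K}_0^+(E)$. Chaining the two gives $[q]\leq[p]$, as required.

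The only point requiring mild care is the compatibility of the various $\mathrm{K}_0$-identifications along the chain $pBp\hookrightarrow B\hookrightarrow E$, but this is straightforward from functoriality and the explicit form of $\mathrm{K}_0(\iota)$ supplied by Remark~\ref{iota inj} and Proposition~\ref{full key}. I therefore do not anticipate any genuine obstacle: once Theorem~\ref{main thm}'s proof is in hand, the corollary reduces to a routine $\mathrm{K}_0$-lifting together with the observation that subprojections in $E$ give inequalities of $\mathrm{K}_0$-classes.
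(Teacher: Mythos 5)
Your proof is correct and is essentially the paper's own (implicit) argument: the paper offers no separate proof, the corollary being exactly the inequality $(*)$, ${\rm K}_0(\iota_5)\circ{\rm K}_0(\iota_3)([q])\leq[p]$, established inside the proof of Theorem~\ref{main thm}, whose derivation there (like yours) depends only on $p$ being a projection in $E\setminus B$. The only cosmetic difference is that you rederive the comparison from the endpoint of that proof (stability and fullness of $pBp$, plus a ${\rm K}_0$-lift of $[q]$ to a subprojection of $p$), whereas the paper reaches $(*)$ slightly earlier via the auxiliary stable corner $(p\oplus d)M_k(B)(p\oplus d)$; both routes are valid and rest on the same lifting idea.
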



\section{Applications}

\begin{definition}\rm
Let
$$e_i:\quad 0 \to B \to E_i \to A \to 0 $$
be two
extensions with Busby invariants $\tau_i$ for $i = 1, 2$.

Two unital extensions $e_1$ and $e_2$ are called $stably$ $strongly$ $unitarily$ $equivalent$, denoted by $e_1
\sim_{ss} e_2$, if there exist unital trivial extensions $\sigma_1,\sigma_2$  such that $e_1\oplus\sigma_1 \sim_s e_2\oplus\sigma_2$.

If $A$ is unital,  denote  $\mathrm{Ext}^u_{ss}(A, B)$ the set of stably strongly unitary equivalence classes of unital extensions of $A$ by $B$.
\end{definition}
From our main result Theorem \ref{main thm}, we have
\begin{corollary}\label{ss s}
Let $A,B$ be nuclear, separable ${\rm C}^*$-algebras of stable rank one and real rank zero.
 Suppose that $A$ is unital simple,  $B$ is stable and $({\rm K}_0(B),{\rm K}_0^+(B))$  is weakly unperforated.
Let $e_1, e_2$ be two unital extensions with trivial index maps:
$$
e_i\,:\,0\to B\to E_i\to A\to 0.
$$
 Then $e_1\sim_{ss}e_2 $ if and only if $e_1\sim_s e_2$.
\end{corollary}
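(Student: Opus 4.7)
The plan is to derive this corollary as a short formal consequence of Theorem \ref{main thm}: under the standing hypotheses, every unital extension of $A$ by $B$ with trivial index maps is absorbing, and absorption is exactly the property needed to cancel the trivial extensions that appear in the definition of $\sim_{ss}$.

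The direction $e_1 \sim_s e_2 \Rightarrow e_1 \sim_{ss} e_2$ is routine. Pick any unital trivial extension $\sigma$ of $A$ by $B$; the existence of such a $\sigma$ is classical (see Thomsen \cite{T}). Since $\sim_s$ is preserved under adding the same summand on both sides (the Busby invariants are conjugated by $\pi(s_1 u s_1^* + s_2 s_2^*)$ when $u$ implemented the original strong unitary equivalence), we get $e_1\oplus\sigma \sim_s e_2\oplus\sigma$, whence $e_1 \sim_{ss} e_2$ by taking $\sigma_1=\sigma_2=\sigma$.

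For the converse, suppose $e_1 \sim_{ss} e_2$. By definition there exist unital trivial extensions $\sigma_1,\sigma_2$ with $e_1 \oplus \sigma_1 \sim_s e_2 \oplus \sigma_2$. Each $e_i$ is a unital extension of $A$ by $B$ with trivial index maps, and $(A,B)$ satisfy the full hypotheses of Theorem \ref{main thm}, so both $e_1$ and $e_2$ are absorbing. By the very definition of absorbing, this gives $e_i \sim_s e_i \oplus \sigma_i$ for $i=1,2$. Chaining the three strong unitary equivalences produces
$$e_1 \sim_s e_1 \oplus \sigma_1 \sim_s e_2 \oplus \sigma_2 \sim_s e_2,$$
and symmetry plus transitivity of $\sim_s$ finishes the argument.

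The substantive work has already been done in Theorem \ref{main thm}; there is no serious remaining obstacle. The only minor point to be careful about is the existence of at least one unital trivial extension so that $\sim_{ss}$ is nonvacuous on the easy direction, and this is handled by appeal to \cite{T}.
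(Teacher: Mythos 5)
Your proof is correct and is exactly the argument the paper intends: the corollary is stated as an immediate consequence of Theorem \ref{main thm} (the paper gives no further details), and your chain $e_1 \sim_s e_1\oplus\sigma_1 \sim_s e_2\oplus\sigma_2 \sim_s e_2$ using absorption of both $e_i$, together with the routine easy direction, fills in precisely those details.
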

\begin{definition}\rm
Given abelian groups $H,K$ and two extensions
$$
e_i\,\,:\,\,0\to K\to G_i\to H\to 0,\quad i=1,2,
$$
we say $e_1$, $e_2$ are $equivalent$, if there is a homomorphism  $\alpha$ making the following diagram commute:
$$
\xymatrixcolsep{2pc}
\xymatrix{
{\,\,0\,\,} \ar[r]^-{}
& {\,\,K\,\,} \ar[d]_-{id} \ar[r]^-{}
& {\,\,G_1\,\,} \ar[d]_-{\alpha} \ar[r]^-{}
& {\,\,H\,\,} \ar[d]_-{id} \ar[r]^-{}
& {\,\,0\,\,} \\
{\,\,0\,\,} \ar[r]^-{}
& {\,\,K\,\,} \ar[r]_-{}
& {\,\,G_2 \,\,} \ar[r]_-{}
& {\,\,H \,\,} \ar[r]_-{}
& {\,\,0\,\,}.}
$$
Denote $\mathrm{Ext}(H,K)$ the set of all the equivalent classes of extensions of $H$ by $K$. It is well-known that $\mathrm{Ext}(H,K)$ forms an abelian group.

Let  $h_0 \in H$. We consider the following
extension of $H$ by $K$ with base point $h_0$,
$$
e:\quad 0\to K\to (G, g_0)\xrightarrow{\psi} (H, h_0)\to 0,
$$
where $g_0 \in G$ and $\psi(g_0) = h_0$.

Suppose we have extensions
$$e_i:\quad
0\to K\to (G_i, g_i)\xrightarrow{\psi} (H, h_0)\to 0,\quad i = 1, 2$$
 with $\psi(g_i) = h_0$. We say  $e_1$ and $e_2$ are equivalent if there is a
homomorphism $\phi : G_1 \to G_2$ with $\phi(g_1) = g_2$ such that the following diagram commutes
$$
\xymatrixcolsep{2pc}
\xymatrix{
{\,\,0\,\,} \ar[r]^-{}
& {\,\,K\,\,} \ar[d]_-{id} \ar[r]^-{}
& {\,\,(G_1, g_1)\,\,} \ar[d]_-{\phi} \ar[r]^-{}
& {\,\,(H, h_0)\,\,} \ar[d]_-{id} \ar[r]^-{}
& {\,\,0\,\,} \\
{\,\,0\,\,} \ar[r]^-{}
& {\,\,K\,\,} \ar[r]_-{}
& {\,\,(G_2, g_2) \,\,} \ar[r]_-{}
& {\,\,(H, h_0) \,\,} \ar[r]_-{}
& {\,\,0\,\,}.}
$$

Let $\mathrm{Ext}((H, h_0), K)$ be the set of equivalent classes of all extensions of $H$ by $K$ with base point $h_0$.

\end{definition}

Let $A$ and $B$ be ${\rm C}^*$-algebras. When $A$ is unital, we set
$$\mathrm{Ext}_{[1]}(\mathrm{K}_*(A), \mathrm{K}_*(B)) = \mathrm{Ext}((\mathrm{K}_0(A), [1_A]), \mathrm{K}_0(B)) \oplus \mathrm{Ext}(\mathrm{K}_1(A), \mathrm{K}_1(B)),$$
$${\rm Hom}_{[1]}(\mathrm{K}_0(A), \mathrm{K}_1(B)) = \{\rho \in {\rm Hom}((\mathrm{K}_0(A), [1_A]), \mathrm{K}_1(B)) : \rho([1_A]) = 0\},$$
$${\rm Hom}_{[1]}(\mathrm{K}_*(A), \mathrm{K}_*(B)) = {\rm Hom}_{[1]}(\mathrm{K}_0(A), \mathrm{K}_1(B)) \oplus {\rm Hom}(\mathrm{K}_1(A), \mathrm{K}_0(B)).$$

Denote $\mathcal{N}$ the ``bootstrap" class of ${\rm C}^*$-algebras in \cite{RS}.
For the  unital version of ``UCT", Wei used the stably strongly unitary equivalence classes of unital extensions to get the following theorem (a similar statement can be found in \cite[Proposition 3.2]{X}).
\begin{theorem}{\rm (}\cite[Theorem 4.9]{W}{\rm )}\label{strong wei}
Suppose that $A$ is a unital separable nuclear ${\rm C}^*$-algebra with $A\in\mathcal{N}$ and $B$ is separable stable ${\rm C}^*$-algebra. Then
there is a short exact sequence of groups
$$
0 \to \mathrm{Ext}_{[1]}(\mathrm{K}_*(A), \mathrm{K}_*(B)) \to \mathrm{Ext}_{ss}^u(A, B) \to {\rm Hom}_{[1]}(\mathrm{K}_*(A), \mathrm{K}_*(B)) \to 0.
$$
\end{theorem}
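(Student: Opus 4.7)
The plan mirrors the classical Rosenberg--Schochet UCT, with base-point refinements throughout to track the unit $[1_A]$. First I will define the index map
$$
\alpha: \mathrm{Ext}_{ss}^u(A, B) \to {\rm Hom}_{[1]}(\mathrm{K}_*(A), \mathrm{K}_*(B))
$$
by sending a class $[e]$ represented by $e: 0 \to B \to E \xrightarrow{\pi} A \to 0$ to the pair of index maps $(\delta_0, \delta_1)$. Since $[1_A] = \mathrm{K}_0(\pi)([1_E])$, exactness of the six-term sequence forces $\delta_0([1_A]) = 0$, so the image lies in ${\rm Hom}_{[1]}$. Any unital trivial extension has vanishing index maps, because a unital splitting $\rho: A \to E$ splits the six-term sequence on $\mathrm{K}$-theory; consequently $\alpha$ descends to $\sim_{ss}$-classes. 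The Busby sum induces the direct sum of index maps, so $\alpha$ is a group homomorphism.

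For the surjectivity of $\alpha$, given $(\rho_0, \rho_1) \in {\rm Hom}_{[1]}(\mathrm{K}_*(A), \mathrm{K}_*(B))$, the Rosenberg--Schochet UCT produces $\kappa \in \mathrm{KK}^1(A, B)$ with K-theory data $(\rho_0, \rho_1)$. Since $A$ is nuclear separable and $B$ is separable stable, the Kasparov--Voiculescu--Thomsen absorption theorem realizes $\kappa$ by an essential extension $e_\kappa: 0 \to B \to E_\kappa \to A \to 0$. The condition $\rho_0([1_A]) = 0$ says the image of $[1_A]$ under the index map vanishes, so $[1_A]$ lifts to a projection class in $\mathcal{M}(B)/B$. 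Adjusting $e_\kappa$ by an inner automorphism of $\mathcal{M}(B)$ that normalizes a chosen such projection, one obtains a unital extension whose index pair is $(\rho_0, \rho_1)$.

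For the kernel: when $\alpha([e]) = 0$, the six-term sequence breaks into two short exact sequences of K-groups, and tracking the preferred lift $[1_E]$ of $[1_A]$ produces a pair
$$
\beta([e]) \in \mathrm{Ext}\bigl((\mathrm{K}_0(A), [1_A]), \mathrm{K}_0(B)\bigr) \oplus \mathrm{Ext}\bigl(\mathrm{K}_1(A), \mathrm{K}_1(B)\bigr) = \mathrm{Ext}_{[1]}(\mathrm{K}_*(A), \mathrm{K}_*(B)).
$$
Well-definedness on $\sim_{ss}$-classes follows because a unital splitting contributes a trivial K-theoretic extension, and homomorphism under Baer sum matches the Busby sum. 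Surjectivity of $\beta$ is again an application of the UCT: any base-pointed K-theory extension class is realized by some unital C$^*$-algebra extension of $A$ by $B$ with trivial index maps.

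The main obstacle will be the injectivity of $\beta$, which demands a classification of unital absorbing extensions up to stable strong unitary equivalence by their K-theoretic data. The approach is to identify $\ker(\alpha)$ with the image of $\mathrm{Ext}(\mathrm{K}_*(A), \mathrm{K}_*(B))$ inside $\mathrm{KK}^1(A, B)$ from the classical UCT, and then to use that absorbing essential extensions of nuclear $A$ by stable $B$ are classified by their $\mathrm{KK}^1$-class; the Elliott--Kucerovsky characterization (Theorem~\ref{ek thm}) ensures that stabilizing by unital trivial extensions enforces absorption while preserving the distinguished lift of $[1_A]$. The delicate bookkeeping is to check that passing to stable strong unitary equivalence respects the base point, so that the refined $\mathrm{Ext}_{[1]}$ (rather than the full $\mathrm{Ext}$) is the correct invariant for the kernel.
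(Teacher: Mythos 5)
The paper does not prove this statement: it is quoted verbatim as Theorem 4.9 of Wei \cite{W} (with a pointer to \cite[Proposition 3.2]{X}), so there is no in-paper argument to compare yours against. Judged on its own, your outline follows the standard strategy one would expect behind such a ``unital UCT'' --- index maps give the right-hand arrow, the kernel is measured by the base-pointed extension group --- and the easy parts (well-definedness of $\alpha$ on $\sim_{ss}$-classes, additivity, $\delta_0([1_A])=0$ from $[1_A]=\mathrm{K}_0(\pi)([1_E])$) are fine. But as a proof it has two genuine gaps, and they are exactly where the content of the theorem lives.

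First, the unitalization step in your surjectivity argument is garbled: $\tau(1_A)$ is \emph{already} a projection in $\mathcal{M}(B)/B$, so ``lifts to a projection class in $\mathcal{M}(B)/B$'' is not the issue. What you need is that $\rho_0([1_A])=0$ kills the index obstruction so that $\tau(1_A)$ lifts to a projection $P\in\mathcal{M}(B)$; then $P$ must be made full and properly infinite (typically by first adding a unital trivial absorbing extension, which is permitted modulo $\sim_{ss}$) so that $\mathrm{K}_0(\mathcal{M}(B))=0$ lets you conjugate $P$ to $1$. ``An inner automorphism that normalizes a chosen such projection'' does not capture this. Second, and more seriously, the injectivity of $\beta$ --- equivalently, exactness in the middle --- is reduced to a slogan. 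You would need: (i) the classification of \emph{unital} absorbing extensions up to strong unitary equivalence by a unital version of the $\mathrm{KK}^1$-invariant, not just the non-unital Kasparov isomorphism $\mathrm{Ext}(A,B)\cong\mathrm{KK}^1(A,B)$; and (ii) an identification of the extra information carried by $\mathrm{Ext}_{[1]}$ over $\mathrm{Ext}$ with the position of $[1_E]$ in $\mathrm{K}_0(E)$ as a $\sim_{ss}$-invariant, including the surjectivity claim that every admissible lift of $[1_A]$ is realized by some unital extension. Neither of these is carried out, and the ``delicate bookkeeping'' you defer is precisely the proof. As it stands this is a correct plan, not a proof; for the actual argument one must consult \cite{W}.
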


Combining with Corollary \ref{ss s}, we get the following result.

\begin{theorem}\label{text}
Let $A,B$ be nuclear, separable ${\rm C}^*$-algebras of stable rank one and real rank zero with $A\in \mathcal{N}$. Assume that  $A$ is unital simple,  $B$ is stable and $({\rm K}_0(B),{\rm K}_0^+(B))$  is weakly unperforated. Denote ${\rm Text}_{s}^u(A,B)$  to be the set of strongly unitary equivalence classes of all the unital extensions of $A$ by $B$ with trivial index maps. Then  we have
$$
\mathrm{Ext}_{[1]}(\mathrm{K}_*(A), \mathrm{K}_*(B))\cong {\rm Text}_{s}^u(A,B).
$$
\end{theorem}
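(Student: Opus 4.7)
The plan is to deduce the isomorphism directly from Wei's exact sequence (Theorem \ref{strong wei}), using Corollary \ref{ss s} to collapse the distinction between $\sim_s$ and $\sim_{ss}$ on the trivial-index-map part.

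First, I would unpack the second arrow $\gamma\colon \mathrm{Ext}^u_{ss}(A,B)\to {\rm Hom}_{[1]}(\mathrm{K}_*(A),\mathrm{K}_*(B))$ of Theorem \ref{strong wei}: by construction it sends the class of an extension $e$ to the pair of index maps $(\delta_0^e,\delta_1^e)$, so that by exactness its kernel is identified with $\mathrm{Ext}_{[1]}(\mathrm{K}_*(A),\mathrm{K}_*(B))$. Let $\mathcal{T}\subseteq \mathrm{Ext}^u_{ss}(A,B)$ denote the subset of $\sim_{ss}$-classes that contain a representative with trivial index maps. Since any unital trivial extension splits and hence has vanishing index maps, and since index maps are additive under $\oplus$, every representative of a class in $\mathcal{T}$ must have trivial index maps. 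Therefore $\mathcal{T}=\ker(\gamma)$, and the short exact sequence furnishes a group isomorphism $\mathcal{T}\cong \mathrm{Ext}_{[1]}(\mathrm{K}_*(A),\mathrm{K}_*(B))$.

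Second, I would apply Corollary \ref{ss s}: among unital extensions of $A$ by $B$ with trivial index maps, the relations $\sim_s$ and $\sim_{ss}$ coincide. Consequently the natural map $\mathrm{Text}^u_s(A,B)\to \mathcal{T}$ sending a $\sim_s$-class to its $\sim_{ss}$-class is a bijection: injectivity is the ``only if'' direction of Corollary \ref{ss s}, while surjectivity is the defining property of $\mathcal{T}$. Transporting the abelian-group structure of $\mathcal{T}\subseteq\mathrm{Ext}^u_{ss}(A,B)$ through this bijection turns $\mathrm{Text}^u_s(A,B)$ into an abelian group such that the bijection is a group isomorphism; composing with $\mathcal{T}\cong\mathrm{Ext}_{[1]}(\mathrm{K}_*(A),\mathrm{K}_*(B))$ delivers the conclusion.

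The only non-routine point is checking that $\gamma$ is indeed the index-map arrow and that $\oplus$ is additive on index maps; both are read off from how Wei's sequence is constructed in \cite{W}. Once these book-keeping facts are in place, the argument is entirely formal, and the substantive input is our main theorem (Theorem \ref{main thm}) entering through Corollary \ref{ss s}.
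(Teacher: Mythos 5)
Your argument is correct and is precisely the route the paper intends: the paper offers no written proof beyond ``combining Theorem \ref{strong wei} with Corollary \ref{ss s}'', and your identification of $\mathrm{Text}^u_s(A,B)$ with the kernel of the index-map arrow in Wei's sequence via Corollary \ref{ss s} is exactly that combination, spelled out. The book-keeping points you flag (additivity of index maps under $\oplus$, vanishing of index maps for split extensions) are standard and hold as you state.
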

In \cite{AL}, Corollary \ref{dasheying} and Theorem \ref{text} are applied to construct a counterexample for Elliott Conjecture with real rank zero and obtain classification theorems of ${\rm C}^*$-algebras in the setting of stable rank one and real rank zero. 

\section*{Acknowledgements}
The authors are very grateful to  Yifeng Xue and Changguo Wei for many helpful discussions.  The first author was supported by NNSF of China (No.:12101113).
The second author was supported by NNSF of China (No.:12101102).


\begin{thebibliography}{}



\bibitem{AL}
Q. An and Z. Liu, Total Cuntz semigroup, Extension and Elliott conjecture with Real rank zero, to appear.









\bibitem{B}
L. G. Brown, Stable isomorphism of hereditary subalgebras of C*-algebras, Pacific J. Math., 71 (2) (1977), 335--348.


\bibitem{BDF}
L. G. Brown, R. G. Douglas and P. A. Fillmore, Extensions of C*-algebras and Khomology,
Ann. of Math., 105 (2) (1977), 265--324.

























\bibitem{Ell}
G. A. Elliott, Dimension groups with torsion, Internat. J. Math., 1 (4) (1990), 361--380.

\bibitem{EK}
G. A. Elliott and  D. Kucerovsky, An abstract Voiculescu-Brown-Douglas-Fillmore absorption theorem, Pacific J. Math., 198 (2) (2001), 385--409.








\bibitem{GL}
G. Gong and H. Lin, On classification of nonunital amenable simple C*-algebras, III : The range and the reduction, Ann. K-Theory, 7 (2) (2022), 279--384.

\bibitem{Goo2}
K. R. Goodearl, $\mathrm{K}_0$ of multiplier algebras of C*-algebras with real
rank zero, K-theory, 10 (5) (1996), 419--489.



\bibitem{L}
H. Lin, An Introduction to the Classification of Amenable $\mathrm{C}^*$-Algebras, World Scientific Publishing Co. Inc., River Edge, NJ, 2001.

\bibitem{LR}
H. Lin and M. R\o rdam, Extensions of inductive limits of circle algebras, J. London Math. Soc., 51 (2) (1995), 603--613.



\bibitem{R1}
M. R\o rdam, Stability of C*-algebras is not a stable property, Doc. Math., 2 (1997), 375--386.


\bibitem{RS}
J. Rosenberg and C. Schochet, The K\"{u}nneth theorem and the universal coefficient theorem for Kasparov's generalized $\mathrm{K}$-functor, Duke Math. J., 55 (2) (1987), 431--474.






\bibitem{T}
K. Thomsen, On absorbing extensions, Proc. Amer. Math. Soc., 129 (5) (2001), 1409--1417.




\bibitem{V}
D. Voiculescu, A non-commutative Weyl-von Neumann theorem, Rev. Roumaine Math. Pures Appl., 21 (1) (1976), 97--113.

\bibitem{W}
C. Wei,  On the classification of certain unital extensions of
C*-algebras, Houston J. Math., 41 (3) (2015), 965--991.


\bibitem{X}
Y. Xue, The Ext-group of unitary equivalence classes of unital extensions,
 Acta Math. Sin. (Engl. Ser.), 27 (12) (2011), 2329--2342.

\bibitem{Z}
S. Zhang, A Riesz decomposition property and ideal structure of multiplier algebras, J. Operator Theory, 24 (2) (1990), 209--225.


\end{thebibliography}
\end{document}